\apptocmd{\sloppy}{\hbadness 10000\relax}{}{}
\apptocmd{\sloppy}{\vbadness 10000\relax}{}{}
\numberwithin{equation}{section}
\theoremstyle{plain}
\newtheorem{theorem}{Theorem}[section]
\newtheorem{corollary}[theorem]{Corollary}
\newtheorem{lemma}[theorem]{Lemma}
\theoremstyle{definition}
\newtheorem{remark}[theorem]{Remark}
\newtheorem{definition}[theorem]{Definition}
\newcommand{\norm}[1]{\left\lVert#1\right\rVert}
\def\XXint#1#2#3{{\setbox0=\hbox{$#1{#2#3}{\int}$ }
\vcenter{\hbox{$#2#3$ }}\kern-.6\wd0}}
\newcommand{\diam}{\mathop\mathrm{diam}\nolimits}
\newcommand{\dist}{\mathop\mathrm{dist}\nolimits}
\begin{document}

\title{Conditions for Eliminating Cusps in One-phase free-boundary problems with degeneracy}

\author{Sean McCurdy}
\keywords{Free-boundary problems, Alt-Caffarelli functional, Stokes wave, partial regularity}

\address{Department of Mathematics\\ National Taiwan Normal University\\ Taipei, Taiwan}
\email{smccurdy@ntnu.edu.tw}

\begin{abstract} In this paper, we continue the study local minimizers of a degenerate version of the Alt-Caffarelli functional. Specifically, we consider local minimizers of the functional $J_{Q}(u, \Omega):= \int_{\Omega} |\nabla u|^2 + Q(x)^2\chi_{\{u>0\}}dx$ where $Q(x) = dist(x, \Gamma)^{\gamma}$ for $\gamma>0$ and $\Gamma$ a  submanifold of dimension $0 \le k \le n-1$.  Previously, it was shown that on $\Gamma$, the free boundary $\partial \{u>0\}$ may be decomposed into a rectifiable set $\mathcal{S}$, which satisfies effective estimates, and a cusp set $\Sigma$. In this note, we prove that under mild assumptions, in the case $n = 2$ and $\Gamma$ a line, the cusp set $\Sigma$ does not exist. Building upon the work of Arama and Leoni \cite{AramaLeoni12}, our results apply to the physical case of a variational formulation of the Stokes' wave and provide a complete characterization of the singular portion of the free boundary in complete generality in this context.
\end{abstract}

\maketitle

\tableofcontents

\renewcommand{\thepart}{\Roman{part}}

\section{Introduction}\label{sec:intro}

In this paper, we provide very general sufficient conditions for eliminating cusps from the free boundary $\partial \{u>0\}$ of local minimizers $u$ for a class of \textit{degenerate} Alt-Caffarelli functionals
\begin{align}\label{alt-caffarelli functional}
J_Q(u, \Omega) := \int_{\Omega} |\nabla u |^2 + Q^2(x)\chi_{\{u > 0\}} dx,
\end{align}
so-called because we allow $Q = 0.$  These assumptions are satisfied by variational models of the Stokes Wave studied in \cite{AramaLeoni12,GravinaLeoni18,GravinaLeoni19}. While \cite{AramaLeoni12,GravinaLeoni18,GravinaLeoni19} eliminate cusps by strong assumptions of symmetry, this paper proves sufficient conditions for the non-existence of cusps without any assumption of symmetry.

A function $u$ is a \textit{minimizer} of (\ref{alt-caffarelli functional})
in the class $$K_{u_0, \Omega}: = \{u \in W^{1, 2}(\Omega) : u-u_0 \in W^{1,2}_0(\Omega) \}$$ for a $u_0 \in W^{1, 2}(\Omega)$ satisfying $u_0 \ge 0$ if for every function $v \in K_{u_0, \Omega},$ $J_{Q}(u, \Omega) \le J_{Q}(v, \Omega)$.  A function $u$ is called a \textit{local minimizer} of $J_Q(\cdot, \Omega)$ if there exists an $0<\epsilon_0$ such that $J_{Q}(u, B_r(x)) \le J_{Q}(v, B_r(x))$ for every $v \in K_{u, \Omega}$ satisfying
\begin{align}
\norm{\nabla (u-v)}^2_{L^2} + \norm{\chi_{\{u>0\}} - \chi_{\{v>0\}}}_{L^1(\Omega)} < \epsilon_0.
\end{align}

Since Alt and Caffarelli's watershed paper \cite{AltCaffarelli81}, assumptions on the function $Q$ play a crucial role in the study of the geometry of the \textit{free boundary} $\partial \{u>0\} \cap \Omega$ for a local minimizers $u$ of (\ref{alt-caffarelli functional}). For example, assuming that $Q$ is continuous and $0< Q_{\min} \le Q(x) \le Q_{\max} < \infty,$ near a free boundary point $x_0$ in $n=2$ dimensions, we can write 
$$u(x) = Q(x_0)(x-x_0, \vec \eta)_+ + O(|x - x_0|)$$
for some unit vector $\vec \eta$. Thus, if $0< Q(x_0)$ the blow-up of $u$ at $x_0$ is a piece-wise linear function. On the other hand, if $x \in \partial \{u >0\}$ and $Q(x) = 0$, $x$ cannot have piece-wise linear blow-ups and therefore $\partial \{u>0 \}$ must be singular at $x$.

The results of \cite{AltCaffarelli81} have inspired myriad generalizations and much interest (e.g., \cite{AltCaffarelliFriedman84}\cite{CRS10}  \cite{DeSilvaRoquejoffre12}\cite{DanielliPetrosyan05}\cite{Valdinoci04}\cite{DavidToro15}\cite{DavidEngelsteinGarciaToro19}). However, virtually all subsequent work on the geometry of the free boundary $\partial \{u>0\}$ has proceeded under the assumptions of the non-degenerate case, $0< Q_{\min} \le Q \le Q_{\max}<\infty$. Indeed, until recently the \textit{degenerate case} $0=Q_{\min} \le Q \le Q_{\max}<\infty$ has received little attention. One of the chief concerns in the degenerate case is the potential existence of cusps. Since the classical estimates in \cite{AltCaffarelli81} which are essential to regularity become vacuous in regions where $Q$ vanishes, establishing weak geometric results such as interior ball conditions becomes impossible using the standard techniques.

Previous work on the geometry of the free boundary in the degenerate case began by reformulating the theory of the Stokes wave, in which one searches for a solution to 
\begin{align} \label{e:free-boundary problem}\nonumber
\Delta u = 0 & \qquad \text{in  } \{u>0\} \subset \mathbb{R}^2\\
u = 0 & \qquad \text{on  } \partial \{u>0 \} \\
\nonumber \left|\frac{\partial}{\partial \eta} u(x, y)\right|^2 = -y  & \qquad \text{on  } \partial \{u>0\},
\end{align}
where $\eta$ is the outward normal to $\partial \{u>0\}$ in a variational setting.  Work on the Stokes Wave traditionally considers weak solutions to (\ref{e:free-boundary problem}), see \cite{WeissVarvaruca11} Definition 3.2 for details. While local minimizers of (\ref{alt-caffarelli functional}) are weak solutions of (\ref{e:free-boundary problem}), weak solutions of (\ref{e:free-boundary problem}) are critical points of (\ref{alt-caffarelli functional}) with some extra assumed regularity and may not be local minimizers of (\ref{alt-caffarelli functional}).  

Arama and Leoni \cite{AramaLeoni12} and subsequently Gravina and Leoni \cite{GravinaLeoni18} \cite{GravinaLeoni19} studied the Stokes Wave in the variational setting by considering local minimizers of (\ref{alt-caffarelli functional}) in $n=2$, $\Omega = \{(x, y): 0<x<1, 0<y \}$, and $Q(x, y) = \sqrt{(h-y)}_+$. They eliminate the possibility of cusps by assuming that the boundary data $u_0|_{\partial \Omega}$ is symmetric. Following their work, the author studied the degenerate case of local minimizers of (\ref{alt-caffarelli functional}) in broader generality \cite{McCurdy20}.  However, little was able to be said about cusps.  

\begin{lemma}(\cite{McCurdy20} Lemma 1.6 and Corollary 6.2)\label{l:finite perimeter}
Let $n \ge 2$, and $0 \le k \le n-1$. For any $0< \gamma$ and $k$-dimensional $C^{1, \alpha}$-submanifold $\Gamma$ if $Q(x)= \text{dist}(x, \Gamma)^\gamma$ and $u$ is a local minimizer of $J_Q(\cdot, B_2(0))$, then we can decompose $\partial \{u>0\} \cap \Gamma$ into $$\partial \{u>0\} \cap \Gamma = \Sigma \cup \mathcal{S}.$$ 
The set $\Sigma$ is the set of cusp points $x$ for which the density
\begin{align}\label{d:cusp density}
    \lim_{r \rightarrow 0^+}\frac{\mathcal{H}^n(B_r(x) \cap \{u>0\})}{\omega_n r^n} = 0.
\end{align}
The set $\mathcal{S}$ contains all the \textit{non-degenerate} singularities for which the density is strictly positive. Furthermore, $\mathcal{H}^{n-1}(\Sigma) = 0.$
\end{lemma}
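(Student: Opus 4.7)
My plan is to obtain the decomposition from a Weiss-type monotonicity formula adapted to the degeneracy, followed by a blow-up dichotomy and a Federer-style dimension reduction for the cusp set. First, I would develop a monotonicity formula at points $x_0 \in \Gamma$ compatible with the natural homogeneity $1+\gamma$ of the weight $Q^2(x) = \dist(x,\Gamma)^{2\gamma}$. Writing $u_{x_0,r}(y) := u(x_0 + ry)/r^{1+\gamma}$, one considers the Weiss-type functional
\begin{equation*}
W(x_0, r) := \frac{1}{r^{n+2\gamma}}\int_{B_r(x_0)} \bigl(|\nabla u|^2 + Q^2 \chi_{\{u>0\}}\bigr)\, dx - \frac{1+\gamma}{r^{n+2\gamma+1}}\int_{\partial B_r(x_0)} u^2 \, d\Haus^{n-1}.
\end{equation*}
A domain-variation computation, analogous to Weiss's in the classical case, should show that $r \mapsto W(x_0,r)$ is non-decreasing up to an error quantifying the deviation of $\Gamma$ from its tangent plane $T_{x_0}\Gamma$; since $\Gamma$ is $C^{1,\alpha}$, this error is integrable in $r$ and the limit $W(x_0, 0^+)$ exists.

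Next, the monotonicity together with uniform $W^{1,2}$ bounds on the rescalings $u_{x_0,r}$ yields, along any sequence $r_j \to 0$, a subsequential blow-up limit $u_0$ that is a local minimizer of $J_{Q_0}$ on $\RR^n$ with $Q_0(y) = \dist(y, T_{x_0}\Gamma)^{\gamma}$, and moreover $(1+\gamma)$-homogeneous about the origin (forced by the equality case in the Weiss monotonicity). The essential dichotomy is then: either $u_0 \equiv 0$, in which case $\Haus^n(B_r(x_0)\cap\{u>0\})/(\omega_n r^n) \to 0$ along the sequence; or $u_0 \not\equiv 0$, in which case a barrier comparison tailored to the weight $Q_0^2$ produces $\Haus^n(\{u_0 > 0\}\cap B_1) \ge c > 0$ and hence strictly positive density for $u$ at $x_0$. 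Since $W(x_0,0^+)$ controls every blow-up at $x_0$, the dichotomy is independent of the extracting sequence, giving the decomposition $\partial\{u>0\} \cap \Gamma = \Sigma \cup \mathcal{S}$ with the claimed density properties.

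For the dimension estimate $\Haus^{n-1}(\Sigma)=0$, the cases $k \le n-2$ are immediate because $\Sigma \subset \Gamma$ already has Hausdorff dimension $\le k < n-1$. The substantive case is $k = n-1$, which I would handle by Federer dimension reduction on $\Sigma$: if $\Haus^{n-1}(\Sigma) > 0$, select an $\Haus^{n-1}$-density point $x_* \in \Sigma$ and iteratively blow-up along tangential directions of $\Gamma$; the resulting homogeneous solution would be translation-invariant in $n-1$ directions while still having vanishing density at the origin, contradicting that non-trivial homogeneous blow-ups of the model $Q_0$-problem must have strictly positive density at the origin. I expect the main obstacle to lie in the non-trivial branch of the blow-up dichotomy: the classical Alt--Caffarelli non-degeneracy fails exactly on $\Gamma$, so the barrier construction must be redesigned to compare against the degenerate weight $\dist(\cdot,T_{x_0}\Gamma)^\gamma$ rather than a constant, and it is this asymmetric comparison that makes the ``non-trivial blow-up implies positive density'' step the technical heart of the lemma.
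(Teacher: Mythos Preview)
This lemma is not proved in the present paper: it is quoted verbatim from \cite{McCurdy20} (Lemma~1.6 and Corollary~6.2) as background, so there is no in-paper proof to compare against. That said, the preliminaries the paper imports from \cite{McCurdy20}---the Weiss $(1+\gamma)$-monotonicity (Theorem~\ref{t:Weiss almost monotonicity}), the compactness of rescalings (Theorem~\ref{t:compactness}), and the identification $W_{\gamma+1}(x_0,0^+,u,\Gamma)=\lim_{r\to 0}r^{-n-2\gamma}\int_{B_r(x_0)}Q^2\chi_{\{u>0\}}$ together with the statement that every blow-up at a cusp point is identically zero (Lemma~\ref{l:Weiss density})---match the first two thirds of your outline quite closely. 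So your monotonicity/blow-up dichotomy is on the right track and is essentially the machinery that \cite{McCurdy20} builds.

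The gap is in your proposed proof of $\mathcal{H}^{n-1}(\Sigma)=0$ via Federer dimension reduction. Your own dichotomy says that at any $x_*\in\Sigma$ the blow-up is $u_0\equiv 0$. Hence after one blow-up the positivity set is empty, the free boundary is empty, and there is no cusp set left to iterate on; you cannot reach a nontrivial homogeneous limiting solution that is translation-invariant in $n-1$ directions, and the contradiction you describe never materializes. Federer reduction works when the singular stratum persists in the blow-up limit; here the defining feature of $\Sigma$ is precisely that the limit degenerates to zero, which kills the iteration. The label \texttt{l:finite perimeter} and the fact that the $\mathcal{H}^{n-1}$ statement is a \emph{corollary} in \cite{McCurdy20} strongly suggest the actual argument goes through a locally finite perimeter estimate for $\{u>0\}$ combined with the density characterization of $\Sigma$, rather than through dimension reduction. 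You should replace the reduction step with an argument of that type: once $\{u>0\}$ is known to have locally finite perimeter up to $\Gamma$, points of $n$-dimensional density zero cannot carry positive $\mathcal{H}^{n-1}$-mass of the (reduced) free boundary, and one then has to rule out $\Sigma$ sitting in the residual set.
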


In this paper, we restrict our attention to the physical case $n=2$ and $k = 1$ with $\Gamma$ flat, i.e., a line.  The main result of this paper is the following theorem.

\begin{theorem}(Main Theorem)\label{t:main theorem}
Let $n = 2$, $k =1$, $\Gamma = \{(x, 0): x \in \mathbb{R}\}$, and $0< \gamma$. Let $Q(x, y)= |y|^{\gamma}$. Let $u$ be a local minimizer of $J_Q(\cdot, B_2(0))$.  Suppose that $\{u>0\} \cap \Gamma = \emptyset$.  Then, $\Sigma = \emptyset.$
\end{theorem}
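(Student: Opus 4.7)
The plan is to proceed by contradiction. Suppose $x_0 \in \Sigma$. The hypothesis $\{u>0\} \cap \Gamma = \emptyset$, combined with the continuity and non-negativity of $u$, forces $u \equiv 0$ on $\Gamma \cap B_2(0)$. By the reflection symmetry of $Q(x,y) = |y|^\gamma$ across $\Gamma$, I may assume the positivity set accumulates at $x_0$ from the upper half-plane, and focus on the connected component $\omega \subset \{u>0\} \cap \{y > 0\}$ whose closure contains $x_0$.

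The first main step is to upgrade the zero-density condition from Lemma~\ref{l:finite perimeter} into uniform decay at the critical scale. Using the $(1+\gamma)$-H\"older regularity of $u$ at cusps from \cite{McCurdy20}, the rescalings $u_r(x) := r^{-(1+\gamma)} u(x_0 + rx)$ are uniformly H\"older on each ball $B_R(0)$. Any subsequential $C^0$-limit $u_0$ is continuous, non-negative, and has positivity set of Lebesgue measure zero (inherited from $|B_{Rr}(x_0) \cap \{u>0\}|/(Rr)^2 \to 0$); hence $u_0 \equiv 0$ by continuity. So $u_r \to 0$ locally uniformly, which in original coordinates reads $\sup_{B_r(x_0)} u = o(r^{1+\gamma})$. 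Applying classical Alt--Caffarelli non-degeneracy, which is valid at any free boundary point $z$ with $y(z) > 0$ (where $Q(z) = y(z)^\gamma > 0$) and yields $\sup_{B_{y(z)/2}(z)} u \geq c\, y(z)^{1+\gamma}$, one concludes that $y(z) = o(r)$ for every $z \in \partial\{u>0\} \cap B_{r/2}(x_0)$. Consequently the free boundary near $x_0$ is trapped in an arbitrarily thin tangential strip about $\Gamma$:
\[
\partial\{u>0\} \cap B_{r/2}(x_0) \subset \{(x,y) : |y| < \epsilon(r) \, r\}, \qquad \epsilon(r) \to 0.
\]

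The final step is an energy comparison exploiting this tangential concentration. Taking $v := \eta_r u$ with $\eta_r$ a smooth radial cutoff equal to $0$ on $B_{r/2}(x_0)$ and $1$ off $B_r(x_0)$, with $|\nabla \eta_r| \leq C/r$, the Dirichlet cost of the truncation is bounded by $Cr^{-2}\int_{B_r(x_0)} u^2\, dx$, while the savings include $\int_{B_{r/2}(x_0)} |\nabla u|^2 \, dx$ together with $\int_{B_{r/2}(x_0) \cap \{u>0\}} |y|^{2\gamma}\, dx$. The assumption $\{u>0\}\cap\Gamma = \emptyset$ is essential here: it guarantees that $v$ strictly reduces the support of $\chi_{\{u>0\}}$, so the weight-term savings is genuine and not absorbed on $\Gamma$ itself. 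The main obstacle is to extract a \emph{strictly} negative sign, since naive estimates place both cost and savings at the common order $r^{2+2\gamma}$. The resolution combines (i) the thinness bound from the previous step, which sharpens the H\"older estimate to a bound of the form $u(x,y) \leq C |y|^{1+\gamma}$ near $\Gamma$ (valid by the degenerate regularity theory, since $u=0$ on $\Gamma$) and thereby improves the cost to $C \epsilon(r)^{3+2\gamma} r^{2+2\gamma}$; and (ii) a Weiss-type monotonicity formula adapted to the scaling $u_r$, in the spirit of \cite{WeissVarvaruca11}, from which one deduces $W(u, x_0, 0^+) = 0$ and then a compatible lower bound on the Dirichlet savings. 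Together these should yield $J_Q(v) < J_Q(u)$ for all sufficiently small $r$, contradicting local minimality and forcing $\Sigma = \emptyset$.
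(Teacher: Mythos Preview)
Your competitor is a radial cutoff $v=\eta_r u$, which is a genuinely different construction from the paper's, but the final energy comparison has a real gap. First, there are \emph{no} Dirichlet savings with this competitor: since $u$ is harmonic in $\{u>0\}$ and $v$ agrees with $u$ on $\partial B_r(x_0)$, integration by parts gives
\[
\int_{B_r}|\nabla(\eta_r u)|^2-\int_{B_r}|\nabla u|^2=\int_{B_r}u^2|\nabla\eta_r|^2\ge 0,
\]
so the only possible gain is the weight term $\int_{B_{r/2}\cap\{u>0\}}|y|^{2\gamma}$. Your item (ii), invoking Weiss monotonicity for a ``compatible lower bound on the Dirichlet savings,'' therefore cannot close the argument: $W(x_0,0^+)=0$ gives an \emph{upper} bound on the rescaled energy density, not a lower bound, and the Dirichlet piece you hoped to save is exactly cancelled. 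Second, to beat the cost $C\epsilon(r)^{3+2\gamma}r^{2+2\gamma}$ you would need the weight savings to be at least of order $\epsilon(r)^{2+2\gamma}r^{2+2\gamma}$, which requires knowing that the free boundary in $B_{r/2}$ still reaches height comparable to $\epsilon(r)r$. Nothing in your outline prevents the height from collapsing much faster on the inner ball; ruling this out is precisely the content of the paper's Lemma~\ref{l:technical heart}, which you have not supplied.

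The paper follows a different route. Rather than cutting $u$ off near the cusp, it rescales so that one connected component $\mathcal{O}$ of $\{u>0\}$ occupies a long window $S$ of width $N\gg 1$ at height uniformly between $C_1(\gamma)$ and $C_2(\gamma)$ (this is the height-control Lemma~\ref{l:technical heart} and Lemma~\ref{l:height bound}). The competitor is then built from the shear $F_t(x,y)=(x,y+tx)$, pushing $\mathcal{O}$ toward $\Gamma$. The Dirichlet cost of the shear is $O(tN)$ (Lemma~\ref{l:increase estimate}), while the decrease in $\int Q^2\chi$ is $O(tN^2)$ (Lemma~\ref{l:decrease estimate}), because each unit slice contributes a drop proportional to its distance from the edge of the window. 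Choosing $N$ large yields the contradiction. The mechanism is thus the attraction of $\Gamma$ on a long strip, not a local excision at $x_0$.
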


This theorem has a physical application to the Stokes wave. We note that \cite{WeissVarvaruca11}, in which the authors investigate weak solutions of (\ref{e:free-boundary problem}) without any assumptions of symmetry, contains the following sufficient conditions for eliminating cusps.

\begin{lemma}\label{Weiss Varvaruca no cusps}(\cite{WeissVarvaruca11}, Lemma 4.4)
Let $n = 2$, $k = 1$, $\Gamma$ is a linear subspace, and $Q(x) = \text{dist}(x, \Gamma)^{\frac{1}{2}}$. If $u$ is a weak solution of (\ref{e:free-boundary problem}) and $|\nabla u(x, y)| \le Q(x, y)$, $\Sigma = \emptyset.$
\end{lemma}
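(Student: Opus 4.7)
The plan is proof by contradiction. Suppose $0 \in \Sigma$. Since $u \ge 0$ and $\{u > 0\}\cap\Gamma = \emptyset$, we have $u \equiv 0$ on $\Gamma \cap B_2$. The cusp condition $|B_r(0)\cap\{u>0\}|/r^2 \to 0$ together with the Weiss-type monotonicity formula
\[
W_{x_0}(u,r) := \frac{1}{r^{2+2\gamma}}\,J_Q(u, B_r(x_0)) \;-\; \frac{1+\gamma}{r^{3+2\gamma}}\int_{\partial B_r(x_0)} u^2\, d\mathcal{H}^1
\]
from \cite{McCurdy20} forces every blow-up $u_r(x) := u(rx)/r^{1+\gamma}$ to be identically zero, and hence $W_0(u,0^+)=0$ and $\sup_{B_r(0)}u = o(r^{1+\gamma})$.

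I would then split on whether $\mathcal{S}\cap\Gamma$ accumulates at $0$. In the \emph{accumulating} case, pick $x_n\in\mathcal{S}\cap\Gamma$ with $x_n\to 0$. The blow-up at each $x_n$ is a nontrivial $(1+\gamma)$-homogeneous local minimizer --- concretely, a ``Stokes-cone'' profile $u_\infty(r,\theta) = A\,r^{1+\gamma}\cos\bigl((1+\gamma)(\theta-\pi/2)\bigr)$ supported on $\theta\in\bigl(\tfrac{\pi\gamma}{2(1+\gamma)},\,\tfrac{\pi(2+\gamma)}{2(1+\gamma)}\bigr)$, arising uniquely from the ODE $g''+(1+\gamma)^2 g=0$ on $\{g>0\}$ and the Bernoulli condition $|g'(\theta_*)|=\sin^\gamma\theta_*$ at $\partial\{g>0\}$ --- whose Weiss density is a fixed positive constant $c_\gamma$. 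Since $x\mapsto W_x(u,r)$ is continuous in $x$ for each fixed $r$, the function $x\mapsto W_x(u,0^+) = \inf_r W_x(u,r)$ is upper semicontinuous, so $0 = W_0(u,0^+) \ge \limsup_n W_{x_n}(u,0^+) \ge c_\gamma > 0$, a contradiction.

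In the \emph{isolated} case, $0$ is isolated in $\partial\{u>0\}\cap\Gamma$, so a single connected component $C\subset\{u>0\}$ satisfies $\bar C \cap \Gamma=\{0\}$ near $0$, and the cusp condition makes $C$ a cuspidal planar domain at $0$. Since $u$ is positive and harmonic in $C$ with $u\equiv 0$ on $\partial C\cap B_{r_0}$, the Carleman--Ahlfors distortion estimate yields super-polynomial decay $u(z)\le M\exp\bigl(-\pi\int_{|z|}^{r_0}\tfrac{ds}{s\,\theta(s)}\bigr)$, where $\theta(s)$ is the angular width of $C$ on $\partial B_s(0)$; the density-zero condition forces $\theta(s)\to 0$. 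I would then compare $u$ in $B_r$ with the harmonic replacement $\tilde h$ defined on $B_r^\pm := B_r\cap\{\pm y>0\}$ by solving Dirichlet problems with $\tilde h=u$ on the spherical parts of $\partial B_r^\pm$ and $\tilde h=0$ on $\Gamma\cap B_r$. Since $u$ is subharmonic, $u \le \tilde h$, and local minimality yields the key estimate $\int_{B_r}|\nabla(u-\tilde h)|^2\,dx \le \int_{B_r\cap\{u=0\}}Q^2\,dx\asymp r^{2+2\gamma}$; the super-polynomial smallness of $u$ makes $\|\tilde h\|_{L^\infty}$ and thus $\int|\nabla\tilde h|^2$ super-polynomially small too, and a second competitor supported deep inside $C$ converts this mismatch into a strict energy decrease.

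The principal obstacle is the isolated-cusp case: the classical non-degeneracy $\sup_{B_r(0)}u\ge c\,r^{1+\gamma}$ fails at cusps in the degenerate setting, so no Alt--Caffarelli-style density-bound contradiction is available directly. Both $n=2$ (through Carleman's estimate on planar cuspidal domains) and the flatness of $\Gamma$ (through the clean $|y|^{1+\gamma}$ scaling of the canonical growth rate on $\Gamma$) are used essentially in turning the super-polynomial harmonic-decay into a quantitative violation of local minimality.
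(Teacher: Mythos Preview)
This lemma is not proved in the present paper at all; it is merely quoted from \cite{WeissVarvaruca11} as a point of comparison with the paper's own Theorem~\ref{t:main theorem}. So there is no ``paper's own proof'' to compare against, and what you have written is not an attempt at that cited result either.

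More importantly, your sketch never invokes the hypothesis $|\nabla u|\le Q$, which is the entire content of the Varvaruca--Weiss condition. Instead you assume $\{u>0\}\cap\Gamma=\emptyset$ (which is the hypothesis of Theorem~\ref{t:main theorem}, not of this lemma) and you repeatedly appeal to \emph{local minimality} and competitor constructions. But the statement concerns \emph{weak solutions} of \eqref{e:free-boundary problem}, and as the introduction emphasizes, weak solutions need not be local minimizers of $J_Q$; competitor arguments are simply unavailable in that generality. The actual Varvaruca--Weiss argument uses the pointwise gradient bound $|\nabla u|\le Q$ directly: integrating it gives the sharp growth estimate $u(x)\lesssim \dist(x,\Gamma)^{1+\gamma}$ near any point of $\partial\{u>0\}\cap\Gamma$, which feeds straight into the Weiss-density computation and rules out density zero without any variational comparison. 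What you have sketched --- Weiss monotonicity plus upper semicontinuity for the accumulating case, Carleman--Ahlfors decay plus an unspecified competitor for the isolated case --- is instead a rough outline of an approach to Theorem~\ref{t:main theorem}, and even there the isolated-cusp step (``a second competitor \ldots converts this mismatch into a strict energy decrease'') is left entirely unsubstantiated.
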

This lemma hold for local minimizers of (\ref{alt-caffarelli functional}).  Working in the context of local minimizers, we are able to prove a much stronger condition.

\begin{corollary}\label{c: 1}(Application to the variational formulation of the Stokes Wave)
Let $n=2$, $0< h <A$, $\Gamma = \{(x, h): x \in \mathbb{R}\}$, and $\Omega = [0, 1]\times[0, A]$.  Let $u_0 \in W^{1, 2}(\Omega)$ satisfy $u_0 = 0$ on $\partial \left([0,1] \times [h, A] \right)$ in the sense of traces. Let $Q(x, y) = \sqrt{|h-y|}$. 

Then, if $u$ is a local minimizer of $J_{Q}(\cdot, \Omega)$ in the class $K_{u_0, \Omega}$ and $\emph{supp}(u) \subset [0, 1] \times [0, h],$ then $\Sigma = \emptyset.$  

In particular, the results of \cite{McCurdy20} completely describe $\emph{sing}(\partial \{u>0\}).$ That is, for all compact sets $K \subset \Gamma$, $\partial \{u>0\} \cap \Gamma$ is locally finite and for all $x \in \partial \{u>0\} \cap \Gamma$ the unique blow-up (up to scaling and rotation) of $u$ at $x$ is given by 
\begin{align*}
u_{x,0^+} = r^{\frac{3}{2}}\sin(\theta\frac{3}{2}),
\end{align*}
defined on the sector $D$ for $D = \{(r, \theta): 0 \le \theta \le \frac{2\pi}{3} ; 0 \le r \}$.
\end{corollary}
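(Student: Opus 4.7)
The plan is to reduce Corollary \ref{c: 1} to the Main Theorem by a translation-and-rescaling argument at each boundary point on $\Gamma$, and then to read off the complete description of $\partial\{u>0\} \cap \Gamma$ by combining $\Sigma = \emptyset$ with the structure theory for the non-degenerate set $\mathcal{S}$ from \cite{McCurdy20}.

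First I would translate so that $\Gamma$ lies on the $x$-axis. Setting $\tilde u(x,y) := u(x, y+h)$, one checks that $\tilde u$ is a local minimizer of $J_{\tilde Q}$ on $[0,1] \times [-h, A-h]$ with $\tilde Q(x,y) = \sqrt{|h-(y+h)|} = |y|^{1/2}$, which is precisely the $Q$ of Theorem \ref{t:main theorem} (with $\gamma = 1/2$). The support hypothesis becomes $\mathrm{supp}(\tilde u) \subset [0,1] \times [-h, 0]$, so $\tilde u \equiv 0$ on the open upper half-rectangle $[0,1] \times (0, A-h]$. By continuity of local minimizers (established in \cite{McCurdy20}), this forces $\tilde u = 0$ on $\Gamma$, and hence $\{\tilde u > 0\} \cap \Gamma = \emptyset$---exactly the hypothesis of the Main Theorem.

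Next I would apply Theorem \ref{t:main theorem} pointwise via rescaling. For an arbitrary $x_0 = (x_1, 0) \in \partial\{\tilde u > 0\} \cap \Gamma$ with $x_1 \in (0,1)$, choose $r > 0$ so small that $B_{2r}(x_0)$ lies in the interior of the translated domain, and set
\begin{align*}
v(z) := r^{-3/2}\, \tilde u(x_0 + r z), \quad z \in B_2(0).
\end{align*}
With $\gamma = 1/2$, the exponent $1+\gamma = 3/2$ is the unique choice for which a direct change of variables yields $J_Q(v, B_\rho(0)) = r^{-2\gamma - n}\, J_Q(\tilde u, B_{r\rho}(x_0))$, so $v$ is a local minimizer of $J_Q(\cdot, B_2(0))$ with the same $Q(z) = |z_2|^{1/2}$; the property $\{v > 0\} \cap \Gamma = \emptyset$ is inherited from $\tilde u$. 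Theorem \ref{t:main theorem} then yields that no cusp point of $v$ lies in $B_2(0)$, and because the density in (\ref{d:cusp density}) is scale- and translation-invariant, this gives $x_0 \notin \Sigma$. Arbitrariness of $x_0$ yields $\Sigma = \emptyset$.

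With $\Sigma = \emptyset$, Lemma \ref{l:finite perimeter} collapses to $\partial\{u>0\} \cap \Gamma = \mathcal{S}$, and the asserted local finiteness on compact subsets of $\Gamma$ together with the unique blow-up $r^{3/2}\sin(\tfrac{3}{2}\theta)$ on the $2\pi/3$-sector follow directly from the structure theorems for $\mathcal{S}$ proved in \cite{McCurdy20}. The main obstacle is essentially bookkeeping: verifying the correct scaling exponent, and confirming that continuity of $u$ genuinely promotes $\mathrm{supp}(u) \subset [0,1]\times[0,h]$ into the pointwise hypothesis $\{u>0\} \cap \Gamma = \emptyset$. No new analytic input beyond the Main Theorem and \cite{McCurdy20} is required.
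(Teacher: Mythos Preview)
Your proposal is correct and matches the paper's own treatment: the paper states only that ``the proof of Corollary~\ref{c: 1} is an immediate application of Theorem~\ref{t:main theorem},'' and your write-up simply supplies the routine details (translation of $\Gamma$ to the $x$-axis, the rescaling $v(z)=r^{-3/2}\tilde u(x_0+rz)$ to land in $B_2(0)$, and the observation that the support hypothesis forces $\{u>0\}\cap\Gamma=\emptyset$). No substantive difference in approach.
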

The proof of Corollary \ref{c: 1} is an immediate application of Theorem \ref{t:main theorem}.  The proof of Theorem \ref{t:main theorem} makes essential use of minimality. We argue by contradiction, assuming that a cusp point $x \in \Sigma$ exists and building a competitor $v$ such that $J_{Q}(v, B_{2}(0)) < J_{Q}(u, B_2(0))$. Furthermore, we make essential use of the interior ball property in Lemma \ref{l:interior balls}, which only holds for local minimizers.

The intuition for the proof is that since $Q(x, y) \rightarrow 0$ as $y \rightarrow 0$, the submanifold $\Gamma$ should ``attract" the positivity set $\{u>0\}.$ Consider a function $u$ for which we can find a window $[0, N]\times [0, 3]$ in which $\{u>0 \}$ is a strip
\begin{align*}
\{u>0\} \cap \left([0, N]\times [0, 3]\right) = [0, N] \times [1, 2].
\end{align*}
Then we would expect $u$ to \emph{not} be a local minimizer, since $\{u>0\}$ should ``sag" under the attraction of $\Gamma$ for large $N$.

The strategy of the proof is to make this intuition rigorous.  That is, rather than analyze the behavior at a point $x \in \Sigma$, we consider a neighborhood \textit{away from} $x \in \Sigma$ in which we can perturb $u$ to be closer to $\Gamma$ and obtain the desired contradiction.  The main estimates rely upon Lemma \ref{l:increase estimate} and Lemma \ref{l:decrease estimate}.  The perturbation competitor itself is defined in Section \ref{S:competitor}. 

We note that the assumption $\{u>0\} \cap \Gamma = \emptyset$ is used twice in the proof: in Lemma \ref{l: e-nbhd} and Lemma \ref{l:decrease estimate}. In Lemma \ref{l: e-nbhd}, the assumption allows us to find a neighborhood of a component of $\{u>0\}$ so that we can perturb that component without pushing it into another component.  This is essential to define a competitor function.  In Lemma \ref{l:decrease estimate}, the assumption ensures that we can estimate the decrease in $(\ref{alt-caffarelli functional})$ from the resulting perturbation.

It is conjectured that the approach developed in this paper is extendable to higher dimensions and co-dimensions.

\subsection{Acknowledgements}
The author acknowledges the Center for Nonlinear Analysis at Carnegie Mellon University for its support.  Furthermore, the author thanks Giovanni Leoni and Irene Fonseca for their invaluable generosity, patience, and guidance.  

\section{Preliminaries}

Throughout this note, we fix $n=2$, $\Gamma = \{(x, 0): x \in \mathbb{R}\}$ and fix $0< \gamma.$  We set
\begin{align}
    Q(x, y):= |y|^\gamma.
\end{align}

We begin with a brief overview of some of the relevant results from \cite{AltCaffarelli81} and \cite{McCurdy20}.

\begin{definition}(Rescalings)
Let $u$ be a $\epsilon_0$-local minimizer of $(\ref{alt-caffarelli functional})$ in the $B_2(0)$, and let $x \in \Gamma \cap \partial \{u>0\}$.  We define the rescalings
\begin{align*}
u_{x, r}(y) := \frac{u(ry + x)}{r^{\gamma +1}}.
\end{align*}
Similarly, for set $A \subset \mathbb{R}^2,$ we define the rescalings
\begin{align*}
    A_{x, r} := \frac{1}{r}(A-x).
\end{align*}
\end{definition}

\begin{remark}\label{minimality preserved}(\cite{AltCaffarelli81} Remark 3.1)
Let $0< \gamma$.  Suppose that $u$ is an $\epsilon_0$-local minimizer of (\ref{alt-caffarelli functional}) in $B_2(0)$.  Then, if $x \in \Gamma \cap \partial \{u>0\},$ for any $0< r<1$, the function $u_{x, r}$ is an $\epsilon'$-local minimizer of (\ref{alt-caffarelli functional}) in the class $K_{u_{x, r}, B_{\frac{1}{r}}(0)},$ where
\begin{align*}
    \epsilon'= \epsilon_0 \max\{r^{-n}, r^{-(n-2\gamma)}\}.
\end{align*}
\end{remark}

\begin{remark}
The techniques used in \cite{AltCaffarelli81} to establish the non-degeneracy of a local minimizer $u$ rely upon comparing $u$ with two other functions:
\begin{enumerate}
    \item The harmonic extension of $u$ in a ball $B_r(0).$
    \item The function $w = \min\{u, v\}$ in $B_r(0)$ for
\begin{align*}
v(x) = \left(\sup_{y \in B_{r \sqrt{s}}(0)} \{u(y)\}\right) \max\left\{1 - \frac{|x|^{2-n} - r^{2-n}}{(sr)^{2-n} - r^{2-n}} , 0\right\}.
\end{align*} 
\end{enumerate}
Since $\norm{\nabla v}^2_{L^2} \le C(s, n) \sup_{y \in B_{r \sqrt{s}}(0)} \{u^2(y)\} r^{n-2}$ and harmonic functions are energy minimizers, for every $\epsilon_0$-local minimizer $u$, there is a uniform scale 
\begin{align} \label{e:standard scale}
    r_0:= r_0(n, \sup_{\partial \Omega} u_0, \norm{\nabla u}_{L^2}, \epsilon_0)
\end{align}
at which we can apply these arguments. We shall refer to this scale $r_0$ as the \textit{standard scale}.
\end{remark}

\begin{remark}
Let $u$ be an $\epsilon_0$-local minimizer of (\ref{alt-caffarelli functional}).  Suppose that $(0,0) \in \Gamma \cap \partial \{u>0\}.$ There is a rescaling of $u$ at $(0,0)$ as $u_{(0,0),r}$, where $0< r(n, \sup_{\partial B_2(0)}u_0, ||\nabla u||_{L^2(B_2(0))}, \epsilon_0)$ is such that $u_{(0,0),r}$ is a $1$-local minimizer in $\mathcal{K}_{u_{(0,0),r}, B_2(0)}$ and the standard scale $r_0$ for $u_{(0,0),r}$ is $r_0 = 1.$
\end{remark}

\begin{lemma}\label{c:local Lipschitz bound}(Local Lipschitz, \cite{McCurdy20} Corollary 3.14)
Let $u$ be a $1$-local minimizer of $J_{Q}(\cdot, B_2(0))$ with standard scale $r_0 = 1.$  Assume that $(x,y) \in \partial \{u>0\}$.  Then, for all  $(x',y') \in \{u>0\} \cap B_1((x, y))$ 
\begin{align*}
|\nabla u(x', y')| \le C(n)\max\{\dist((x', y'), \partial \{u>0\}), |y'|\}^{\gamma}.
\end{align*}
\end{lemma}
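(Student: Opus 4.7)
The plan is to combine the Euler--Lagrange equation for $u$, which forces harmonicity on $\{u>0\}$, with an Alt--Caffarelli-style upper growth bound at the nearest free boundary point. Fix $p := (x',y') \in \{u>0\} \cap B_1((x,y))$, set $d := \dist(p, \partial\{u>0\})$, and choose $q \in \partial\{u>0\}$ with $|p-q|=d$. Since $u$ is a $1$-local minimizer, the interior equation $\Delta u = 0$ holds on $\{u>0\}$, so the standard interior gradient estimate for harmonic functions gives
\begin{equation*}
|\nabla u(p)| \le \frac{C(n)}{d}\sup_{B_{d/2}(p)} u.
\end{equation*}
It therefore suffices to prove $\sup_{B_{d/2}(p)} u \le C(n)\, d\,\max\{d,|y'|\}^{\gamma}$.

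To produce this bound, I would establish the Alt--Caffarelli growth estimate
\begin{equation*}
\sup_{B_r(q)} u \le C(n)\, r\sup_{B_{2r}(q)} Q
\end{equation*}
at the free boundary point $q$ for every $r$ up to the standard scale $r_0 = 1$. The proof is the classical comparison argument adapted to the weight $Q$: if this inequality failed for a sufficiently large constant, one would replace $u$ on $B_r(q)$ by the cutoff $w = \min\{u, v\}$, where $v$ is the rescaled capacitary function recalled in the Preliminaries. This produces a decrease in Dirichlet energy of order $r^{n-2}(\sup u)^2$ (or its logarithmic analogue in $n=2$), while the cost from any newly-created positivity set is at most $|B_r|\sup_{B_{2r}(q)} Q^2$; since the competitor satisfies $\|\chi_{\{u>0\}} - \chi_{\{w>0\}}\|_{L^1} + \|\nabla(u-w)\|_{L^2}^2 < \e_0$ at the standard scale, the $1$-local minimality of $u$ is triggered and yields a contradiction. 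Applying the resulting estimate with $r = 2d$, and using $|y_q| \le |y'| + d$ together with $\sup_{B_{4d}(q)}|y|^{\gamma} \le (|y'|+5d)^{\gamma} \le C(\gamma)\max\{d,|y'|\}^{\gamma}$, yields
\begin{equation*}
\sup_{B_{d/2}(p)} u \le \sup_{B_{2d}(q)} u \le C\, d\, \max\{d,|y'|\}^{\gamma},
\end{equation*}
which combined with the harmonic gradient estimate completes the proof. The case $d > 1/2$ (where $2d$ exceeds $r_0$) is handled separately by the global $L^\infty$ bound on $u$.

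The main technical obstacle is executing the Alt--Caffarelli comparison uniformly as $q$ approaches $\Gamma$: when $q \in \Gamma$ the weight $Q(q)$ vanishes, and only the estimate $\sup_{B_{2r}(q)} Q \le (|y_q| + 2r)^{\gamma}$ keeps the bound meaningful. Verifying that the new positivity set produced by the competitor genuinely sits inside $B_{2r}(q)$, so that its $Q^2$-cost is controlled by the supremum of $Q^2$ on a ball of radius $2r$ rather than by some pointwise value at the center, requires the non-degeneracy machinery from \cite{McCurdy20}. The factor $\max\{d,|y'|\}^{\gamma}$ in the final bound naturally interpolates the two regimes $|y'| \le d$, where the distance to the free boundary provides the controlling scale, and $|y'| > d$, where $p$ is far from $\Gamma$ and the weight is already non-degenerate; both are captured uniformly by the single growth bound at $q$.
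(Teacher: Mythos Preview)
The paper does not prove this lemma; it is quoted from \cite{McCurdy20} (Corollary 3.14) as a preliminary result, so there is no proof in the present paper to compare against. Your overall strategy---interior gradient estimate for the harmonic function $u$ on $B_{d/2}(p)$, combined with an Alt--Caffarelli growth bound $\sup_{B_r(q)} u \le C\, r \sup_{B_{2r}(q)} Q$ at the nearest free-boundary point $q$, followed by the elementary estimate $\sup_{B_{2r}(q)}|y|^{\gamma} \le C\max\{d,|y'|\}^{\gamma}$---is the standard route and is almost certainly what \cite{McCurdy20} does.

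There is, however, a genuine confusion in the competitor you invoke for the growth bound. The truncation $w = \min\{u,v\}$ against the capacitary function is the device for the \emph{lower} bound (non-degeneracy): it shows that if $\sup_{B_r(q)} u$ is too \emph{small} relative to $r\,Q_{\min}$, one can kill $u$ on an inner ball and save energy. For the \emph{upper} bound you need, this competitor does not fit your own bookkeeping: since $w \le u$ pointwise, $\{w>0\} \subset \{u>0\}$ and there is no ``newly-created positivity set'' whose $Q^2$-cost you can bound by $|B_r|\sup Q^2$. The correct competitor is the harmonic extension $h$ of $u$ in $B_r(q)$ (the first of the two functions recalled in the Preliminaries): local minimality gives
\[
\int_{B_r(q)}|\nabla(u-h)|^2 \le \int_{B_r(q)\cap\{u=0\}} Q^2 \le C\,r^{n}\sup_{B_r(q)}Q^2,
\]
and the left side controls $r^{n-2}\bigl(\tfrac{1}{r}\dashint_{\partial B_r(q)} u\bigr)^2$ via the representation argument of \cite{AltCaffarelli81}, Lemma 3.2; subharmonicity of $u$ then upgrades the spherical average to the supremum on a smaller ball. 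Once you swap in the harmonic extension, the remainder of your argument goes through as written.
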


\begin{lemma}\label{l:interior balls}(Interior Balls, \cite{McCurdy20} Lemma 3.17)
Let $u$ be a $1$-local minimizer of $J_{Q}(\cdot, B_2(0))$ with standard scale $r_0 = 1.$  For any $B_R(y)$, we denote $Q_{\min, B_R(y)} = \min \{Q(z): z \in B_R(y) \}.$

Let $x \in \partial \{u>0 \}$ satisfy $B_{r}(x) \subset B_1(0) \setminus \Gamma$, then there exists a point $y \in \{u>0 \} \cap \partial B_{\frac{1}{2}r}(x)$ and a constant $0< c(n, Q_{\min, B_r(x)}) <\frac{1}{2}$ such that for all $z \in B_{\frac{c}{2}r}(y)$
\begin{align*}
    u(z) \ge C(n) r Q_{\min, B_{\frac{1}{2}r}(x)}.
\end{align*}
\end{lemma}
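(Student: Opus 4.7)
The hypothesis $B_r(x) \subset B_1(0) \setminus \Gamma$ puts us inside a ``non-degenerate window'' where $0 < Q_{\min, B_r(x)} \le Q \le Q_{\max, B_r(x)}$, so the plan is to run the classical Alt--Caffarelli non-degeneracy argument on $B_{r/2}(x)$ and then propagate the resulting lower bound to an interior ball using the Lipschitz estimate of Lemma \ref{c:local Lipschitz bound} together with Harnack's inequality.

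First I would compare $u$ on $B_{r/2}(x)$ with $w := \min\{u,v\}$, where $v$ is the capacitary cutoff from item~(2) of the preceding remark (taking, say, $s = 1/4$). Since $x \in \partial\{u>0\}$, $Q \ge Q_{\min, B_{r/2}(x)}$ on the ball, and $u$ is a $1$-local minimizer at standard scale $r_0 = 1$, the minimality inequality for $u$ against $w$ reduces to the familiar bound
\[
\sup_{B_{r/2}(x)} u \;\ge\; c(n)\, r\, Q_{\min, B_{r/2}(x)}.
\]
Subharmonicity of $u$ forces this supremum onto $\partial B_{r/2}(x)$, so there is a point $y \in \partial B_{r/2}(x) \cap \{u>0\}$ with $u(y) \ge c(n)\, r\, Q_{\min, B_{r/2}(x)}$.

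Next, set $d := \dist(y, \partial\{u>0\}) \le r/2$. Integrating the Lipschitz bound $|\nabla u(x',y')| \le C(n)\,\max\{\dist((x',y'),\partial\{u>0\}),|y'|\}^{\gamma}$ from Lemma \ref{c:local Lipschitz bound} along the segment joining $y$ to its closest free-boundary point gives
\[
u(y) \;\le\; C(n)\, d\, \max\{d, |y_2|\}^{\gamma} \;\le\; C(n)\, Q_{\max, B_r(x)}\, d,
\]
and combining with the previous step yields $d \ge c(n, Q_{\min, B_r(x)})\, r$. Since $B_{d/2}(y) \subset \{u>0\}$, the function $u$ is positive and harmonic there, so Harnack gives $u \ge c(n)\, u(y)$ on $B_{d/4}(y)$. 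Choosing $c = c(n, Q_{\min, B_r(x)}) < 1/2$ with $c\,r/2 \le d/4$ then produces the required pointwise lower bound on $B_{cr/2}(y)$, with the height constant depending only on $n$.

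The step I expect to be the main obstacle is the careful bookkeeping between the two roles of $Q_{\min}$: the radius constant $c$ must depend on $Q_{\min, B_r(x)}$ (entering through the Lipschitz-to-distance conversion, which uses the upper bound $Q_{\max, B_r(x)}$ and hence indirectly the geometry of the ambient ball), while the height constant multiplies $Q_{\min, B_{r/2}(x)}$ coming from the non-degeneracy step. One also has to verify that the capacitary competitor $v$ can genuinely be inserted as an admissible perturbation at scale $\epsilon_0 = 1$ without spilling outside $B_1(0)$; apart from this, the three ingredients---comparison with a capacitary cutoff, Lipschitz-to-distance, and Harnack---are completely standard.
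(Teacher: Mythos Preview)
This lemma is not proved in the present paper; it is quoted verbatim from \cite{McCurdy20} (Lemma~3.17 there) as a preliminary result, so there is no in-paper proof to compare against. Your outline is the standard Alt--Caffarelli route---non-degeneracy via the capacitary competitor to get a point $y\in\partial B_{r/2}(x)$ with $u(y)\gtrsim rQ_{\min}$, then the Lipschitz bound to force $\dist(y,\partial\{u>0\})\gtrsim r$, then Harnack on the resulting interior ball---and this is almost certainly what the cited reference does as well.

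The argument is correct. Two small points of bookkeeping worth tightening: first, when you bound $\max\{d,|y'|\}^{\gamma}$ along the segment from $y$ to its nearest free-boundary point, note that the segment stays in $\overline{B_r(x)}\subset B_1(0)$, so $|y'|^{\gamma}\le Q_{\max,B_r(x)}\le 2^{\gamma}$ and the constants pick up a harmless $\gamma$-dependence (suppressed in the statement since $\gamma$ is fixed throughout). Second, since $Q_{\min,B_{r/2}(x)}\ge Q_{\min,B_r(x)}$, the lower bound $d\ge c(n,\gamma)\,Q_{\min,B_{r/2}(x)}\,r$ you obtain can indeed be recorded as $c=c(n,Q_{\min,B_r(x)})$ exactly as the lemma asserts. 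The admissibility of the capacitary competitor at scale $\epsilon_0=1$ is precisely what the ``standard scale $r_0=1$'' hypothesis is designed to guarantee, so that concern is already handled by the setup.
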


\begin{theorem}(\cite{McCurdy20} Theorem 4.3, Corollary 4.6
)\label{t:Weiss almost monotonicity}
For $u$ a local minimizer of $J_{Q}(\cdot, B_2(0))$.  Assume that $x_0 \in \Gamma \cap \partial \{u>0\}$. We define the Weiss $(1+ \gamma)$-density
\begin{align*}
W_{\gamma + 1}(x_0, r, u, \Gamma) := \frac{1}{r^{n-2+2(\gamma +1)}} \int_{B_r(x_0)}|\nabla u|^2 + Q^2(x)\chi_{\{u>0\}}dx - \frac{\gamma + 1}{r^{n-1+2(\gamma + 1)}}\int_{\partial B_r(x_0)} u^2 d\sigma
\end{align*}
For almost every $0<r\le 1 $
\begin{align*}
\frac{d}{dr}W_{\gamma +1}((0,0), r, u, \Gamma) & \le \frac{2}{r^{n+2\gamma}}\int_{\partial B_r((0,0))}\left(\nabla u \cdot \eta - \frac{\gamma +1}{r} u \right)^2d\sigma\\
W_{\gamma +1}((0,0), r, u, \Gamma) & \le W_{\gamma +1}((0,0), R, u, \Gamma).
\end{align*}
Furthermore, $\lim_{r \rightarrow 0^+} W_{\gamma +1}((0,0), r, u, \Gamma) = W_{\gamma +1}((0,0), 0^+, u, \Gamma)$ exists and $W_{\gamma + 1}((0,0), 0^+, u, \Gamma) \in [-c(n,\gamma, \alpha), c(n, \gamma, \alpha)]$.
\end{theorem}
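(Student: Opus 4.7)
The plan is to follow the classical Weiss monotonicity argument adapted to the degenerate weight $Q(x,y)=|y|^\gamma$, exploiting that $\Gamma$ is flat through $x_0$. The three ingredients are (i) the scale-invariance of $W_{\gamma+1}$ under the rescaling $u_r(y)=r^{-(\gamma+1)}u(ry+x_0)$; (ii) a Pohozaev-type identity obtained from an inner variation of $J_Q$; and (iii) the harmonicity of $u$ on $\{u>0\}$ coming from outer variations. Combining these collapses the derivative $\frac{d}{dr}W_{\gamma+1}$ into a perfect square, which yields the monotonicity.

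First, since $x_0\in\Gamma$, one has $Q(ry+x_0)=r^{\gamma}Q(y)$, so by Remark \ref{minimality preserved} the rescaled $u_r$ is an admissible local minimizer on the appropriate ball, and a change of variables gives $W_{\gamma+1}(x_0,r,u,\Gamma)=W_{\gamma+1}(0,1,u_r,\Gamma)$. I would then compute $\frac{d}{dr}W_{\gamma+1}$ directly by the coarea formula, producing a linear combination of the bulk term $\int_{B_r}(|\nabla u|^2+Q^2\chi_{\{u>0\}})$, the boundary energies $\int_{\partial B_r}(|\nabla u|^2+Q^2\chi_{\{u>0\}})$ and $\int_{\partial B_r} u^2$, and the cross-term $\int_{\partial B_r}u\,\partial_\eta u$, with explicit powers of $r$. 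Second, I would derive the Pohozaev identity
$$\int_{B_r(x_0)}\bigl[(n-2)|\nabla u|^2+(n+2\gamma)Q^2\chi_{\{u>0\}}\bigr]\,dx = r\int_{\partial B_r}\bigl(|\nabla u|^2+Q^2\chi_{\{u>0\}}\bigr)\,d\sigma - 2r\int_{\partial B_r}(\partial_\eta u)^2\,d\sigma$$
by testing the first variation with the inner variation $\Phi_t(x)=x+t\phi(x)(x-x_0)$ and letting $\phi\nearrow\chi_{B_r}$; the coefficient $n+2\gamma$ in place of the usual $n$ comes directly from the flat-$\Gamma$ identity $\nabla(Q^2)(x)\cdot(x-x_0)=2\gamma Q^2(x)$, which uses $x_0\in\Gamma=\{y=0\}$. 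Third, outer variations give $\Delta u=0$ on $\{u>0\}$ with $u=0$ on $\partial\{u>0\}$, so integration by parts yields $\int_{B_r}|\nabla u|^2=\int_{\partial B_r}u\,\partial_\eta u$. Substituting both identities into the expression for $\frac{d}{dr}W_{\gamma+1}$ cancels the $A(r)$-term and collapses the remainder into the perfect square
$$\frac{d}{dr}W_{\gamma+1}(x_0,r,u,\Gamma)=\frac{2}{r^{n+2\gamma}}\int_{\partial B_r(x_0)}\left(\nabla u\cdot\eta-\frac{\gamma+1}{r}u\right)^{2}d\sigma\ \ge\ 0,$$
from which $W_{\gamma+1}(x_0,r)\le W_{\gamma+1}(x_0,R)$ for $r\le R$ follows by integration; existence and boundedness of the limit as $r\to 0^+$ then follow from this monotonicity together with the a priori Lipschitz and energy bounds furnished by Lemma \ref{c:local Lipschitz bound}.

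The main obstacle will be the rigorous justification of the Pohozaev identity, since the free boundary $\partial\{u>0\}$ is a priori only a topological set and $Q$ degenerates on $\Gamma$. I would handle this by approximating $\chi_{\{u>0\}}$ by $\chi_{\{u>\varepsilon\}}$ on smooth level sets where $|\nabla u|\ne 0$ (using Sard to select almost every $\varepsilon$), applying the divergence theorem there, and passing to the limit $\varepsilon\to 0^+$ via the $C^{\infty}_{\loc}$-regularity of $u$ on $\{u>0\}$ together with the dominated convergence provided by the integrability of $|\nabla u|^2$ and $Q^2\chi_{\{u>0\}}$ on $B_r$. The flatness of $\Gamma$ through $x_0$ is exactly what makes the identity exact; for a curved $\Gamma$ or a base point $x_0\notin\Gamma$, the correction $\nabla(Q^2)\cdot(x-x_0)-2\gamma Q^2$ is nonzero and of lower order, which would downgrade the conclusion to an almost-monotonicity statement with an additive $O(r^\kappa)$ error, consistent with the ``almost'' in the name of the theorem.
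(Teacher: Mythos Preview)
The paper does not prove this statement: Theorem~\ref{t:Weiss almost monotonicity} is quoted verbatim from \cite{McCurdy20} (Theorem~4.3 and Corollary~4.6) as a preliminary result, with no argument given here. So there is no in-paper proof to compare against.

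That said, your outline is exactly the standard Weiss monotonicity computation one expects the cited reference to carry out, and the adaptation to the degenerate weight is handled correctly. The crucial structural observation---that for $x_0\in\Gamma=\{y=0\}$ and $Q^2(x,y)=|y|^{2\gamma}$ one has $\nabla(Q^2)(x)\cdot(x-x_0)=2\gamma\,Q^2(x)$, which replaces the usual factor $n$ by $n+2\gamma$ in the Pohozaev identity---is precisely what makes the derivative collapse to a perfect square in the flat case. Your remark that a curved $\Gamma$ would introduce a lower-order error and yield only almost-monotonicity is also on point and matches the theorem's name and the general setting of \cite{McCurdy20}.

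One small comment: you derive an \emph{equality}
\[
\frac{d}{dr}W_{\gamma+1}=\frac{2}{r^{n+2\gamma}}\int_{\partial B_r}\Bigl(\partial_\eta u-\tfrac{\gamma+1}{r}u\Bigr)^2\,d\sigma,
\]
whereas the statement records a $\le$. Your equality is the sharper (and correct) conclusion for flat $\Gamma$; the inequality as written in the statement is presumably the residue of the general almost-monotonicity form in \cite{McCurdy20}, and in any case the monotonicity $W(r)\le W(R)$ follows from your equality, not from the displayed $\le$ alone. Your plan for justifying the Pohozaev step via Sard-selected level sets $\{u>\varepsilon\}$ and dominated convergence is the standard route and is adequate here.
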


\begin{theorem}\label{t:compactness}(Compactness,  \cite{McCurdy20} Theorem 5.3)
Let $u^i$ be a sequence of $1$-local minimizer of $J_{Q}(\cdot, B_2(0))$ with standard scale $r_0 = 1.$ Assume that $x_i \in \Gamma \cap \partial \{u>0\}.$ For any sequence of $r_i \rightarrow r \in [0, 1]$.  Let $B= B_\frac{1}{r}(0)$ if $r>0$ and $B = \mathbb{R}^n$ if $r = 0$. 

Then, there is a subsequence, $r_j \rightarrow r$ and a function $u \in C^{0}_{loc}(B) \cap W^{1, 2}_{loc}(B)$ such that
\begin{enumerate}
\item $u^j_{x_j, r_j} \rightarrow u$ in $C^{0}_{loc}(B) \cap W^{1, 2}_{loc}(B)$.
\item For every $0<R< \diam(B)$ and any $\epsilon >0$,  there is an $N \in \mathbb{N}$ such that for all $j \ge N$,
\begin{align*}
\partial \{u > 0 \} \cap B_r(0) \subset B_{\epsilon}(\partial \{u^j_{x_j, r_j} > 0 \}).
\end{align*}
\item For any $0< R< \diam(B)$ and any $\epsilon > 0$, there is an $N \in \mathbb{N}$ such that for all $j \ge N$,  
\begin{align*}
\partial \{u^j_{x_j, r_j} > 0 \} \cap B_R(0) \cap \{u > 0 \} \subset B_{\epsilon}(\partial \{u > 0 \}).
\end{align*}
Similarly, for any $0< R< \diam(B)$ and any $\epsilon > 0$, there is an $N \in \mathbb{N}$ such that for all $j \ge N$,  
\begin{align*}
\partial \{u^j_{x_j, r_j} > 0 \} \cap B_R(0) \cap \{u = 0 \} \subset B_{\epsilon}(\partial \{u > 0 \} \cup \Gamma).
\end{align*}
\item $\chi_{\{u^j_{x_j, r_j} >0 \}} \rightarrow \chi_{\{u>0 \}}$ in $L^1_{loc}(B)$.
\item The function $u$ is harmonic in $\{u > 0\}$.
\item For any $0<r\le1$ and any sequence of points $y_j \rightarrow y \in \Gamma \cap B_1(0)$
\begin{align*}
    W_{\gamma +1}(y_j, r, u_{x_j, r_j}, \Gamma) \rightarrow W_{\gamma +1}(y, r, u, \Gamma).
\end{align*}
\end{enumerate}
\end{theorem}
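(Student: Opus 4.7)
The plan is to produce the limit $u$ in stages, upgrading the topology of convergence at each step. First, each rescaling $u^j := u^j_{x_j, r_j}$ is an $\epsilon'$-local minimizer of $J_Q$ on $B_{1/r_j}(0)$ by Remark \ref{minimality preserved}, and because $x_j \in \Gamma \cap \partial\{u^j>0\}$ with $u^j_{x_j,r_j}(0)=0$, Lemma \ref{c:local Lipschitz bound} gives uniform $C^{0,1}_{\loc}(B)$ bounds for the rescaled gradients (the bound $|\nabla u^j_{x_j,r_j}(y)| \le C\max\{\dist(y, \partial\{u^j_{x_j,r_j}>0\}), |y_2|\}^\gamma$ is scale-invariant). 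Arzelà--Ascoli then produces a subsequence converging in $C^0_{\loc}(B)$ to some Hölder-continuous limit $u$, and the uniform gradient bound yields weak $W^{1,2}_{\loc}$ convergence along a further subsequence.

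The main upgrade, to strong $W^{1,2}_{\loc}$ convergence (item 1), relies on local minimality. Standard comparison arguments, using competitors built from $u^j$ and $u$ via cutoffs, yield a matching $\limsup$ bound on $\int_K |\nabla u^j|^2$ for any compact $K \subset B$ that, combined with weak lower semicontinuity, upgrades convergence to strong. Along the way the degenerate term $\int Q^2 \chi$ is controlled because $Q^2 = |y|^{2\gamma}$ is continuous and $\chi_{\{u^j>0\}} \to \chi_{\{u>0\}}$ in $L^1_{\loc}$ (item 4), which I would prove in parallel using $C^0$ convergence, the Alt--Caffarelli fact $|\partial\{u>0\}|=0$, and the free-boundary inclusions of items 2 and 3.

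The free-boundary inclusions (items 2 and 3) are proved by the standard density dichotomy. For item 2: at $z \in \partial\{u>0\} \cap (B\setminus\Gamma)$, Lemma \ref{l:interior balls} applied to $u^j$ forces nearby positive values that, via $C^0$ convergence, approach $u(z)$, and conversely small positive values of $u$ near $z$ (obtained from non-degeneracy) guarantee nearby points of $\partial\{u^j>0\}$; for $z \in \Gamma \cap \partial\{u>0\}$, Theorem \ref{t:Weiss almost monotonicity} provides a substitute non-degeneracy through a definite positive lower bound on $W_{\gamma+1}(\cdot, r, u, \Gamma)$ at free-boundary points. For item 3, $C^0$ convergence handles points of $\{u>0\}$ directly, while on $\{u=0\}$ the interior ball estimate rules out approach of $\partial\{u^j>0\}$ except near $\partial\{u>0\}$ or near $\Gamma$, where the estimate degenerates as $Q_{\min} \to 0$; this is exactly why $\Gamma$ appears on the right in the second inclusion. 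Item 5 (harmonicity) passes to the limit via item 3 together with weak convergence of gradients (test functions supported in $\{u>0\}$ are, for large $j$, supported in $\{u^j>0\}$), and item 6 follows from items 1 and 4 together with the continuity of $Q^2$ and the smooth boundary integrand.

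The main obstacle I would expect is the interplay between free-boundary convergence and the degeneracy of $Q$ on $\Gamma$: Lemma \ref{l:interior balls} is vacuous on $\Gamma$, so the standard Alt--Caffarelli density arguments must be replaced near $\Gamma$ by Weiss-based substitutes, and this is precisely what forces the exceptional $\Gamma$ term in item 3 of the statement. The admissibility checks for the comparison competitors in the strong-convergence step and the passage of the $\int Q^2 \chi$ term to the limit both have to be carried out uniformly in $j$ with this degeneracy in mind; once that is handled, the remaining pieces follow the standard compactness pattern for Alt--Caffarelli-type functionals.
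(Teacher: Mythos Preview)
The paper does not contain a proof of this theorem: it is stated in the Preliminaries section as a result imported from \cite{McCurdy20} (Theorem 5.3 there), with no argument given here. So there is no ``paper's own proof'' to compare your proposal against.

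That said, your outline follows the standard compactness pattern for Alt--Caffarelli-type problems and is broadly reasonable for how such a result is typically established: uniform local Lipschitz bounds from Lemma~\ref{c:local Lipschitz bound} plus Arzel\`a--Ascoli for $C^0_{\loc}$ convergence, weak $W^{1,2}_{\loc}$ convergence from the gradient bound, an upgrade to strong convergence via competitor comparison using local minimality, free-boundary inclusions from density/non-degeneracy (with the degeneracy on $\Gamma$ forcing the extra $\Gamma$ term in item~3), and then items~4--6 as consequences. One point to be careful about: your invocation of Theorem~\ref{t:Weiss almost monotonicity} to supply ``a definite positive lower bound on $W_{\gamma+1}$ at free-boundary points'' for item~2 near $\Gamma$ is not quite right as stated, since cusp points in $\Sigma$ have Weiss density zero (Lemma~\ref{l:Weiss density}); the actual argument near $\Gamma$ has to allow for this and is more delicate than a uniform lower bound. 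But since the present paper only quotes the result, there is nothing further to compare.
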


\begin{lemma}\label{l:Weiss density}(\cite{McCurdy20}Lemma 6.1)
Let $u$ be a minimizer of $J_{Q}(\cdot , \Omega)$ in the class $\mathcal{K}_{g, \Omega}.$  Let $x_0 \in \Gamma \cap \partial \{u>0 \}$.  Then
\begin{align*}
W_{\gamma +1}(x_0, 0^+, u, \Gamma) = \lim_{r \rightarrow 0^+}\frac{1}{r^{n + 2\gamma}}\int_{B_r(x_0)}Q^{2}(x)\chi_{\{u>0 \}}dx.
\end{align*}
Therefore, $W_{\gamma +1}(x_0, 0^+, u, \Gamma) \in [0, c_n],$ where $c_n = \int_{B_1(0)}|y|^{2\gamma}d(x, y).$ 

In particular, if $x_0 \in \Sigma$, then $W_{\gamma +1}(x_0, 0^+, u, \Gamma) =0$ and every blow-up $u_{x_0, 0}\equiv 0.$ Furthermore, the function $x \mapsto W_{\gamma +1}(x, 0, u, \Gamma)$ is upper semicontinuous when restricted to $\Gamma \cap \partial \{u>0 \} \cap B_1(0).$
\end{lemma}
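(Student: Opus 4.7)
The plan rests on three pillars: the Weiss monotonicity formula (Theorem~\ref{t:Weiss almost monotonicity}), the compactness theorem for rescalings (Theorem~\ref{t:compactness}), and the homogeneity of blow-ups at $\Gamma$-points.

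For the identity, since $x_0 = (a, 0) \in \Gamma$ the weight transforms as $Q(x_0 + ry) = r^{\gamma}|y_2|^{\gamma} = r^{\gamma}Q(y)$, so a change of variables gives
\begin{align*}
\frac{1}{r^{n+2\gamma}} \int_{B_r(x_0)} Q^2(x)\chi_{\{u>0\}}\,dx = \int_{B_1(0)} Q^2(y)\chi_{\{u_{x_0,r}>0\}}\,dy,
\end{align*}
and a parallel computation yields $W_{\gamma+1}(x_0, r, u, \Gamma) = W_{\gamma+1}(0, 1, u_{x_0,r}, \Gamma)$. By Theorem~\ref{t:compactness}, along any sequence $r_j \to 0^+$ a subsequence produces a blow-up $u_\infty$ with $\chi_{\{u_{x_0,r_j}>0\}} \to \chi_{\{u_\infty>0\}}$ in $L^1_{\loc}$ and, via part~(6), $W_{\gamma+1}(0, 1, u_{x_0,r_j}, \Gamma) \to W_{\gamma+1}(0, 1, u_\infty, \Gamma)$. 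Since the $r$-limit of the Weiss density exists by Theorem~\ref{t:Weiss almost monotonicity}, the derivative inequality in the monotonicity formula degenerates into equality along the blow-up, forcing $\nabla u_\infty \cdot \eta = \frac{\gamma+1}{|y|} u_\infty$ and hence $(\gamma+1)$-homogeneity of $u_\infty$. Because $u_\infty$ is harmonic on $\{u_\infty>0\}$ by Theorem~\ref{t:compactness}(5), integration by parts over $B_1 \cap \{u_\infty>0\}$ (with vanishing contribution on the free boundary, where $u_\infty = 0$) gives $\int_{B_1} |\nabla u_\infty|^2 = (\gamma+1)\int_{\partial B_1} u_\infty^2$, so the gradient and boundary terms of $W_{\gamma+1}(0, 1, u_\infty, \Gamma)$ cancel, leaving $W_{\gamma+1}(0, 1, u_\infty, \Gamma) = \int_{B_1} Q^2 \chi_{\{u_\infty>0\}}$. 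Matching the two limits proves the identity; the bound $[0, c_n]$ then follows from $0 \le \chi_{\{u_\infty>0\}} \le 1$.

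For the cusp statement, if $x_0 \in \Sigma$ then (\ref{d:cusp density}) combined with the $L^1_{\loc}$ convergence of characteristic functions gives $|\{u_\infty>0\} \cap B_1| = 0$; since $u_\infty$ is continuous and nonnegative, $\{u_\infty>0\}$ is open and of measure zero, hence empty, so $u_\infty \equiv 0$. The identity just proved forces $W_{\gamma+1}(x_0, 0^+, u, \Gamma) = 0$, and because the argument applied to any subsequential blow-up, every blow-up at $x_0$ vanishes identically.

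Finally, for upper semicontinuity, fix $x_k \to x$ in $\Gamma \cap \partial\{u>0\} \cap B_1(0)$. For any $r > 0$, monotonicity gives $W_{\gamma+1}(x_k, 0^+, u, \Gamma) \le W_{\gamma+1}(x_k, r, u, \Gamma)$; continuity of $y \mapsto W_{\gamma+1}(y, r, u, \Gamma)$ at fixed $r$ (dominated convergence on the bulk term, and a change of variables $y = x_k + r\omega$ with continuity of $u$ on the spherical term) yields $W_{\gamma+1}(x_k, r, u, \Gamma) \to W_{\gamma+1}(x, r, u, \Gamma)$. Taking $\limsup_k$ and then sending $r \to 0^+$ completes the proof. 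The main subtlety I anticipate is the rigorous justification of $(\gamma+1)$-homogeneity of the blow-up from the equality case of the monotonicity inequality; once that is in place, the remainder is a systematic application of Theorems~\ref{t:Weiss almost monotonicity} and~\ref{t:compactness}.
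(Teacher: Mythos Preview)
The paper does not actually prove this lemma; it is quoted from \cite{McCurdy20} as a preliminary result and no proof is supplied here, so there is nothing in the present paper to compare against. That said, your argument is correct and is exactly the standard route to such an identity: scaling invariance of the Weiss density, homogeneity of blow-ups obtained from the equality case of the monotonicity formula, and the Euler-identity cancellation $\int_{B_1}|\nabla u_\infty|^2=(\gamma+1)\int_{\partial B_1}u_\infty^2$ for a $(\gamma+1)$-homogeneous function harmonic on its positivity set, which reduces $W_{\gamma+1}$ to the pure $Q^2\chi$ term. The cusp and upper-semicontinuity parts are also the expected arguments. The one technical point you flag---passing from constancy of $r\mapsto W_{\gamma+1}(0,r,u_\infty,\Gamma)$ to genuine $(\gamma+1)$-homogeneity of $u_\infty$---is indeed the only nontrivial step, and it requires knowing that the derivative formula in Theorem~\ref{t:Weiss almost monotonicity} persists for the limit $u_\infty$ (this is part of the content of the compactness theorem, since $u_\infty$ inherits the variational structure); once that is granted, Euler integration of $\partial_r u_\infty=(\gamma+1)u_\infty/r$ gives homogeneity and the rest follows.
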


\section{Local Behavior}

In this Section, we study the local geometry of $\{u>0 \}$ near $(0,0) \in \Sigma.$  The main result of this seciton is Lemma \ref{l:technical heart}, in which we obtain the neighborhoods in which we will work.  We note that the only lemma in this section which is motivated by the assumption that $\{u>0\} \cap \Gamma \not = \emptyset$ is Lemma \ref{l: e-nbhd}, in which we obtain a modicum of separation locally of the components of $\{u>0\}$.  This will be crucial later so that we may define a perturbation.

\begin{remark}(Attenuation Radius)\label{r:attenuation radius}
Let $n, k \in \mathbb{Z}$ such that $n \ge 2$ and $0\le k \le n-1$.  For $(x, y) \in \mathbb{R}^{k}\times \mathbb{R}^{n-k}$, we define $Q(x, y) = |y|^k$.  Suppose that $u$ is an $1$-local minimizer of (\ref{alt-caffarelli functional}) in $B_2(0) \subset \mathbb{R}^n$ for $Q$, as above, with standard radius $r_0 = 1$. Let $(0,0) \in \Sigma.$ By Lemma \ref{l:Weiss density} and Lemma \ref{l:interior balls} for any $0<\eta$ we can find a radius $0< r(\eta)$ such that 
\begin{align}
    \{u_{(0,0), r}>0\} \subset B_1(0) \subset \{(x, y) \in \mathbb{R}^k \times \mathbb{R}^{n-k}: |y| \le \eta |x|\}. 
\end{align}
\end{remark}

\begin{lemma}
Let $n=2$, $0< \gamma$, and let $u$ be an $1$-local minimizer of (\ref{alt-caffarelli functional}) in $B_2(0)$  in $\mathcal{K}_{u, B_2(0)}$ for $Q(x, y)= |y|^\gamma$ with standard radius $r_0 = 1$. $\Sigma$ is locally isolated.
\end{lemma}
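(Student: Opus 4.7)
I argue by contradiction via a blow-up at $(0,0)\in\Sigma$. Suppose $(0,0)$ is not locally isolated in $\Sigma$; then there exists a sequence $x_j=(a_j,0)\in\Sigma\cap\partial\{u>0\}$ with $a_j\ne 0$ and $a_j\to 0$, and after passing to a subsequence and a reflection we may assume $a_j>0$. Set $r_j:=2|x_j|$ and $v_j:=u_{(0,0),r_j}$. By Theorem~\ref{t:compactness}, along a further subsequence $v_j\to v$ in $C^0_{\loc}(\mathbb{R}^2)\cap W^{1,2}_{\loc}(\mathbb{R}^2)$ together with the free-boundary convergences (2)--(4) and the Weiss-density convergence (6). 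The rescaled image of each cusp is the fixed point $p_j:=x_j/r_j=(1/2,0)\in\Gamma$; by scale-invariance of the Weiss density, $W_{\gamma+1}(p_j,0^+,v_j,\Gamma)=W_{\gamma+1}(x_j,0^+,u,\Gamma)=0$, so each $p_j$ is itself a cusp of $v_j$.

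The limit $v$ is identically zero. By Remark~\ref{r:attenuation radius} applied at the cusp $(0,0)\in\Sigma$, for every $\eta>0$ and all sufficiently large $j$, $\{v_j>0\}\cap B_R(0)\subset\{|y|\le\eta|x|\}$. A diagonal argument sending $\eta\to 0$ along the subsequence forces $\{v>0\}\subset\Gamma$, and since $\{v>0\}$ is open in $\mathbb{R}^2$, $v\equiv 0$.

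To contradict $v\equiv 0$ in the presence of the rescaled cusp sitting at the fixed point $p_j=(1/2,0)$, I exploit the interior-ball property of Lemma~\ref{l:interior balls}. Since $p_j\in\partial\{v_j>0\}$, there exist free-boundary points $q_j$ of $v_j$ accumulating at $p_j$ with $q_{j,2}\ne 0$; I choose $q_j$ as the highest such point in a fixed small neighborhood of $p_j$ and apply Lemma~\ref{l:interior balls} at $q_j$ with radius $|q_{j,2}|/2$ (so that $Q_{\min}>0$ on the relevant ball). This yields a ball of $v_j$-positivity of radius $\sim|q_{j,2}|$ on which $v_j\ge C|q_{j,2}|^{\gamma+1}$, and Lemma~\ref{c:local Lipschitz bound} upgrades the pointwise output to a uniform lower bound on this ball. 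If $|q_{j,2}|$ stayed bounded below away from zero, this would contradict the uniform convergence $v_j\to 0$ on compact sets off $\Gamma$.

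The main obstacle is that the attenuation at $(0,0)$ forces $|q_{j,2}|\to 0$ along the blow-up scale $r_j=2|x_j|$, so the interior-ball output collapses consistently with $v\equiv 0$ and Theorem~\ref{t:compactness}(3). The remedy I would pursue is to replace $r_j$ with an attenuation-adapted scale at $x_j$ itself: fix a small $\eta_0>0$ and let $\rho_j$ be the largest radius for which Remark~\ref{r:attenuation radius} applied at $x_j$ holds with opening $\eta_0$. Under the rescaling $w_j:=u_{x_j,\rho_j}$ the cusp at the new origin retains a nondegenerate cone opening, so the interior-ball construction produces a ball of $w_j$-positivity on a scale bounded below independently of $j$. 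The attenuation at $(0,0)$, transferred to $w_j$ via the image point $-x_j/\rho_j$, then forces the limit $w$ to satisfy $\{w>0\}\subset\Gamma$ near that image, contradicting the persistent ball of positivity at the origin — provided $|x_j|/\rho_j$ remains controlled along a subsequence, which is expected from the monotonicity and upper semicontinuity in Theorem~\ref{t:Weiss almost monotonicity} and Lemma~\ref{l:Weiss density}. Making this simultaneous scale-matching rigorous — so that compactness applies to $w_j$, both attenuations can be pushed to a useful regime, and the interior-ball output survives in the limit — is the principal technical hurdle.
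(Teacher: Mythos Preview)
Your argument has a genuine gap, which you yourself flag as ``the principal technical hurdle.'' Blowing up at $(0,0)$ along $r_j=2|x_j|$, you correctly obtain $v\equiv 0$, but then the interior-ball output at $q_j$ collapses because $|q_{j,2}|\to 0$; this is consistent with the compactness theorem and produces no contradiction. Your remedy --- rescaling instead at $x_j$ by an attenuation-adapted scale $\rho_j$ --- requires $|x_j|/\rho_j$ to stay bounded so that the image of $(0,0)$ remains in a compact region, but you give no argument for this, and there is no a priori reason the attenuation scales at the two cusps should be comparable. The appeal to monotonicity and upper semicontinuity of the Weiss density is in the right spirit, but as written it is a hope rather than a proof.

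The paper sidesteps the scale-matching problem entirely by working with the Weiss density directly rather than with a limit function. At each nearby cusp $x_i$, the attenuation at $(0,0)$ bounds the height $r_{x_i}$ of $\{u>0\}$ above $x_i$ by $\eta|x_i|$. A compactness argument (using the interior ball at scale $r_{x_i}$) then shows $W_{\gamma+1}(x_i,2r_{x_i},u,\Gamma)\ge\epsilon_2$ for a \emph{uniform} $\epsilon_2>0$. Monotonicity of $W_{\gamma+1}$ in $r$ propagates this lower bound to all $r\ge 2r_{x_i}$; since $2r_{x_i}\le 2\eta|x_i|\to 0$, for any fixed $r_0>0$ eventually $W_{\gamma+1}(x_i,r_0,u,\Gamma)\ge\epsilon_2$. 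Continuity of the Weiss density in the base point (Theorem~\ref{t:compactness}(6)) then gives $W_{\gamma+1}((0,0),r_0,u,\Gamma)\ge\epsilon_2$, and letting $r_0\to 0$ contradicts $W_{\gamma+1}((0,0),0^+,u,\Gamma)=0$. The point is that monotonicity transfers information from the tiny scale where the interior ball lives up to a \emph{fixed} scale, so one never needs to see both cusps simultaneously in a single rescaling.
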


\begin{proof}
We argue this by contradiction.  Without loss of generality, by translation and rescaling, we may assume that $(0, 0)  \in \Sigma$, that $(0, 0)$ is also a limit point of $\Sigma,$ and that $B_1(0) \subset \Omega.$ Furthermore, we may assume that the standard radius $r_0 = 1$ and that $u$ is a $1$-local minimizer.

Let $\mathcal{O}$ be a component of $\{u>0\}$ such that $(0, 0) \in \overline{\mathcal{O}}.$ Because $(0,0)$ is a limit point of $\Sigma$, either $\{(x, 0): x>0 \}$ contains infinitely many points of $\Sigma$ or $\{(x, 0): x<0\}$ does. Without loss of generality, we assume that the former case holds.  Similarly, $(0, 0)$ is either a limit point of $\{u>0\} \cap \{{x, y}: y>0 \}$ or $\{u>0\} \cap \{{x, y}: y<0\}$.  Without loss of generality, we assume that the former case holds.

By Remark \ref{r:attenuation radius}, for any $0< \eta$ we may find a radius $0<r(\eta)$ such that for all $0<r<r(\eta)$
$$
\{u>0\} \cap B_r(0, 0) \subset \{(x, y) \in \mathbb{R}^2: |y| \le \eta |x| \}.
$$
Let $0 < \eta \le \frac{1}{4}$ be fixed and let $0<r< r(\eta).$ 

\textbf{Claim 1:} We claim that for any $0<r<r(\eta),$ we can always find a point $(x, 0) \in \Sigma \cap B_{r}((0,0))$ such that $0<x<r$ and 
\begin{align*}
    \{(x, y): |y| \le \eta |x|\} \cap \{u=0\} \not = \emptyset.
\end{align*}
To prove the claim, we observe that for each $0< x_1 \le r$ such that $(x_1, 0) \in \Sigma \cap B_{r}((0,0))$, then $(x_1, 0)$ is the limit point of a component of $\{u>0 \} \cap B_r((0,0))$ which must intersect $\partial B_{r}((0,0))$ at either
\begin{align*}
    \partial B_r((0, 0)) \cap \{(x, y): 0<x,  |y| \le \eta x\}
\end{align*}
or
\begin{align*}
    \partial B_r((0, 0)) \cap \{(x,y): x<0, |y| \le \eta x\}.
\end{align*}
Since we are assuming, without loss of generality, that there is a sequence of $0< x_i$ such that $(x_i, 0) \in \Sigma$ and $x_i \rightarrow 0$, let $0< x_1<  x_2< x_3 \le r$ be three such points. If the component of $\{u>0 \} \cap B_r((0,0))$ which touches $(x_2, 0)$ intersects $\partial B_{r}((0,0))$ in $\partial B_r((0, 0)) \cap \{(x, y): 0<x,  |y| \le \eta x\}$, then $(x_3,0)$ satisfies the claim. If the component of $\{u>0 \} \cap B_r((0,0))$ which touches $(x_2, 0)$ intersects $\partial B_{r}((0,0))$ in $\partial B_r((0, 0)) \cap \{(x, y): x<0,  |y| \le \eta |x|\}$, then $(x_1,0)$ satisfies the claim.

Now, let $(x, 0) \in \Sigma \cap B_r((0,0)).$  Let $r_{x_1} := \sup\{y \in [0, \eta x]: (x,y) \in \{u>0\}\}.$

\textbf{Claim 2:} We claim that there is a constant, $\epsilon_2>0$, such that $W_{\gamma +1}((x_1, 0), 2r_{x_1}, u, \Gamma) > \epsilon_2$.  Note that by Lemma \ref{l:interior balls}, there is a ball of radius $c2r_{x_1}$ completely contained in $\{u>0\} \cap B_{2r_{x_1}}(x_1, 0).$ We verify the claim by a limit-compactness argument.  Suppose that there were a sequence of functions $u_i$ which have points $(x_i, 0) \in \Sigma$ and radii $r_i>0$ such that there is a ball of radius $cr_i$ contained in $\{u>0\} \cap B_{r_i}(x_i)$, for a fixed $0<c.$  And, assume that $W_{\gamma +1}((x_i, 0), r_i, u_i, \Gamma) \le 2^{-i}.$  By applying Lemma \ref{t:compactness} to $u^i_{x_i, r_i}$, we may pass to a subsequence which converges strongly to a function $u_{\infty}$ for which $W_{\gamma +1}((0, 0), 1, u_{\infty}, \Gamma) = 0$.  

Since $W_{\gamma + 1}((0, 0), 1, u_{\infty}, \Gamma)$ is monotonic non-decreasing, $$W_{\gamma +1}((0, 0), 0, u_{\infty}, \Gamma) = \Theta^{2}((0, 0), \{u_{\infty}>0\}) = 0.$$ By Theorem \ref{t:Weiss almost monotonicity} $u_{\infty}$ is homogeneous, which implies $u_{\infty} \equiv 0$. However, by Lemma \ref{l:interior balls} the compactness of the unit ball, and strong convergence, there exists a ball of radius $\frac{1}{2}c$ contained in $B_1(0) \cap \{u_{\infty} > 0\}.$ This contradiction proves the claim.

By the monotonicity of the Weiss density we see that $W_{\gamma +1}((x_1, 0), r, u, \Gamma) \ge \epsilon_2$ for all $r \ge 2r_{x}$. Since $2r_x \le 2\eta x$, for any fixed $0<r_0,$ we may repeat this calculation for any sequence $\{ x_i\} \subset \Sigma$ such that $\lim_{i \rightarrow \infty}x_i = x_0.$  For $x_i$ sufficiently close to $x_0,$ $W_{\gamma +1}((x_i, 0), r, u, \Gamma) \ge \epsilon_2$. Applying Lemma \ref{t:compactness} to $u_{x_i, r}$ implies that $W_{\gamma+1}((0, 0), r, u, \Gamma) \ge \epsilon_2$.  Since $0< r_0$ was arbitrary, this contradicts that assumption that $W_{\gamma +1}((0, 0), 0, u, \Gamma)=0$.  Therefore, $\Sigma$ is isolated.
\end{proof}

\begin{lemma}\label{l: e-nbhd}
Let $n=2$, $0< \gamma$, and let $u$ be an $1$-local minimizer of (\ref{alt-caffarelli functional}) in $B_2(0)$  in $\mathcal{K}_{u, B_2(0)}$ for $Q(x, y)= |y|^\gamma$ with standard radius $r_0 = 1$.  Let $R:= [a, b] \times [c, d]$ be a rectangle disjoint from $\Gamma$. Suppose that
\begin{align*}
    \{u>0\} \cap \partial R \subset \left(\{a\} \times [c, d]\right) \cup \left(\{b\} \times [c, d]\right).
\end{align*}
Then, for each component $\mathcal{O}_i \subset \{u>0 \} \cap R$ and $0<\eta$ there exists a $0<\epsilon(\eta, \mathcal{O}_i)$ such that $B_{\epsilon}(\mathcal{O}_i) \cap R_{\eta} \cap \mathcal{O}_j = \emptyset$ for all $j \not = i$, where we define $R_{\eta} := [a+\eta, b-\eta] \times [c, d]$. 
\end{lemma}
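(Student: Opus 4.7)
The plan is to argue by contradiction: suppose the conclusion fails for some component $\mathcal{O}_i$ and some $\eta>0$. Then there exist sequences $x_k \in R_\eta \cap \mathcal{O}_{j_k}$ with $j_k \neq i$ and $y_k \in \mathcal{O}_i$ satisfying $|x_k - y_k| \to 0$. By compactness of the closed rectangle $\overline{R}$ and passage to a subsequence, one obtains a common limit $p \in \overline{R_\eta}$ with $x_k, y_k \to p$.

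The first step will be to show $p \in \partial\{u>0\}$. If $u(p) > 0$, then since $u$ is continuous and $\{u>0\}$ is open, a small ball $B_\rho(p) \cap R$ (connected as the intersection of two convex sets) is entirely contained in $\{u>0\} \cap R$, hence in a single component $\mathcal{O}_m$. For $k$ large, both $x_k, y_k \in B_\rho(p) \cap R \subset \mathcal{O}_m$, forcing $\mathcal{O}_{j_k} = \mathcal{O}_m = \mathcal{O}_i$ and contradicting $j_k \neq i$. Hence $u(p) = 0$, and as $p$ is a limit of points in $\{u>0\}$, $p \in \partial\{u>0\} \cap R_\eta$.

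The second step will leverage that $p$ is a classical, non-degenerate Alt-Caffarelli free-boundary point: since $R \cap \Gamma = \emptyset$, we have $\dist(p,\Gamma) \geq \dist(R,\Gamma) > 0$, so $Q$ is bounded below by a positive constant in a neighborhood of $p$. The classical two-dimensional regularity theory (\cite{AltCaffarelli81}) then yields a ball $B_\rho(p)$ on which $\partial\{u>0\}$ is a $C^{1,\alpha}$ curve and $\{u>0\} \cap B_\rho(p)$ is a connected set lying on one side of this curve. Intersecting with $R$ preserves connectedness, and using that $x_k, y_k \in B_\rho(p) \cap R \cap \{u>0\}$ for $k$ large, we conclude $x_k$ and $y_k$ lie in the same component of $\{u>0\} \cap R$, giving the contradiction $\mathcal{O}_{j_k} = \mathcal{O}_i$.

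I anticipate that the main obstacle will be the clean invocation of two-dimensional Alt-Caffarelli regularity to establish local connectedness of $\{u>0\}$ at $p$, together with a boundary-type subtlety when $p$ happens to lie on the top or bottom edge of $R$ (where $u$ vanishes on a segment by hypothesis). In the latter case one either applies the analogous boundary regularity, or reduces to the interior case via the interior ball lemma (Lemma \ref{l:interior balls}) and the compactness theorem (Theorem \ref{t:compactness}) applied to the classical non-degenerate rescaling $u(p+r\cdot)/r$, whose blow-up limit is a half-plane solution incompatible with the persistent separation of two disjoint components approaching $p$.
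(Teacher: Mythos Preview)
Your approach is correct and genuinely different from the paper's. The paper argues as follows: it first shows, via the perimeter estimate \cite{AltCaffarelli81} Theorem~4.5(3), that no interior point of $R$ can be an accumulation point of points drawn from infinitely many distinct components, since each such component passing near the point contributes a definite amount of $\mathcal{H}^1$-measure to $\partial\{u>0\}$ while the total perimeter in a ball is finite. From this it deduces that for every $x \in \partial\mathcal{O}_i \setminus \partial R$ the maximal radius $r_x$ with $B_{r_x}(x) \cap \{u>0\} \subset \mathcal{O}_i$ is strictly positive, observes that $x \mapsto r_x$ is continuous, and then takes an infimum over the compact set $\partial\mathcal{O}_i \cap \overline{R_\eta}$.

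You instead invoke the full two-dimensional $C^{1,\alpha}$ regularity of the free boundary at the limiting point $p$ to conclude directly that $\{u>0\}$ is locally connected there. This gives a cleaner geometric picture and sidesteps the length-counting argument, at the price of appealing to a stronger regularity statement than the perimeter bound. The boundary case you flag (when $p$ lands on the top or bottom edge of $R$) is in fact more benign than your sketch suggests: since $u \equiv 0$ on those edges by hypothesis, any path in $\{u>0\} \cap B_\rho(p)$ joining $x_k$ to $y_k$ cannot cross the edge and hence stays in $R$; connectedness of $\{u>0\} \cap B_\rho(p)$ therefore already forces $x_k$ and $y_k$ into the same component of $\{u>0\} \cap R$, with no need for boundary regularity or blow-up analysis.
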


\begin{proof}
We argue by contradiction.  Suppose that there is a rectangle $R$ in which $\{u>0 \} \cap R$ consists of infinitely many components $\{\mathcal{O}_i\}_i$.  Now, by choosing a point in $x_i \in \partial \mathcal{O}_i$, we may extract a convergent subsequence $x_j \rightarrow x_{\infty} \in R$. Since $\partial \{u>0 \}$ is closed, $x_\infty \in \partial \{u>0\} \cap R$.

Now, we claim that all subsequential limits $x_{\infty} \in \partial R$.  Suppose that $B_{\epsilon}(x_{\infty}) \subset R.$ Then by \cite{AltCaffarelli81} Theorem 4.5 (3), 
\begin{align*}
    \mathcal{H}^{1}(B_\epsilon (x_\infty) \cap \partial \{u>0\}) < Cr^{1}.
\end{align*}
Since by assumption $x_j \rightarrow x_{\infty}$ and each $x_j$ is assumed to belong to a different component, each $x_j \in B_{\frac{1}{2}\epsilon}(x_{\infty})$ contributes at least $\epsilon$ length of $\partial \{u>0\}.$ This is a contradiction.

Thus, for a connected component $\mathcal{O}_i \subset \{u>0 \} \cap R$, and every every point $x \in \partial \mathcal{O}_i \setminus \partial R$, there is a radius $0<r_x \le \text{dist}(x, \partial R)$ such that $r_x$ is the maximal radius which satisfies 
\begin{align*}
    B_{r_x}(x) \cap \{u>0\} \subset \mathcal{O}_i.
\end{align*}
Furthermore, by the triangle inequality this radius $r_x$ is a continuous function of $x \in \partial \mathcal{O}_i \setminus \partial R$.  Therefore, for any $0<\eta$ $\partial \mathcal{O}_i \cap \overline{R_{\eta}}$ is compact and there exists an $0< \epsilon$ such that $0<\epsilon<r_x$ for all $x \in \partial \mathcal{O}_i \cap \overline{R_{\eta}}.$
\end{proof}

Now that we have an idea of the local geometry around points in $\Sigma,$ we show that we can find very specific windows, in which we will work.  This lemma works in higher-dimensions.

\begin{lemma}\label{l:technical heart}
Let $n,k \in \mathbb{z}$, $n \ge 2$, and $0 \le k \le n-1$.  Let $(x, y) \in \mathbb{R}^{k}\times \mathbb{R}^{n-k}.$  Let $0< \gamma$ and $Q(x, y) = |y|^\gamma$.  Let $u$ be an $1$-local minimizer of (\ref{alt-caffarelli functional}) for $Q$ in $B_2(0)$ with standard radius $r_0 = 1.$  Suppose that $(0, 0) \in \Sigma$.

For any $1<N< \infty$, we may find a radius $0< \rho(N)$ such that the rescaling $u_{(0, 0), \rho}$ satisfies the following conditions
\begin{itemize}
    \item [i.] There exists a large radius $1<<N_0<\infty$ satisfying $4N<N_0$ such that,
\begin{align}\label{e: standard window upper bound}
    \max \{|y| : (x, y) \in \left(\partial B^k_{N_0}(0) \times [-2,2]^{n-k}\right) \cap \partial \{u_{(0, 0), r} >0\}\} &= 1.
\end{align}
    \item [ii.] For the radius $N_0 - N$
\begin{align}
    \max \{|y| : (x, y) \in \left(\partial B^k_{N_0-N}(0) \times [-2,2]^{n-k}\right) \cap \partial \{u_{(0, 0), \rho} >0\}\} &\ge \frac{1}{2}.
\end{align}
\end{itemize}
That is, we may find two points $w, v \in \partial \{u_{(0, 0), \rho}>0 \}$ such that $w = (x_w, y_w)$ satisfies 
\begin{itemize}
    \item [a.] $|x_w| = N_0$
    \item [b.] $|y_w|  = 1 = \max \{y : (x, y) \in \left(\partial B^k_{N_0}(0) \times [-2,2]^{n-k}\right) \cap \partial \{u_{(0, 0), \rho} >0\}\}$
\end{itemize}
and $v= (x_v, y_v)$ satisfies 
\begin{itemize}
    \item [a.] $|x_v| = N_0 - N$
    \item [b.] $|y_v| = \max \{y : (x, y) \in \left(\partial B^k_{N_0-N}(0) \times \mathbb{R}^{n-k}\right) \cap \partial \{u_{(0, 0), \rho} >0\}\} \ge \frac{1}{2}.$
\end{itemize}
and, $\text{dist}(w, v) \ge N$.
\end{lemma}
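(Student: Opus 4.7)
The strategy is to use the cusp attenuation from Remark~\ref{r:attenuation radius} together with a telescoping dyadic-scale argument to identify a rescaling that simultaneously satisfies both conditions. I sketch the argument for $n=2, k=1$; the higher-dimensional case proceeds analogously.

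\textbf{Paragraph 1 (Condition (i) via a first-crossing radius).} For each small $\rho > 0$, introduce the cumulative height
\[
g(t) \;:=\; \sup\bigl\{|y| : (x,y) \in \partial\{u > 0\} \cap B_1(0),\ |x| \le t\bigr\},
\]
which is monotone non-decreasing and upper semicontinuous in $t$. By Remark~\ref{r:attenuation radius}, the cusp attenuation forces $g(t)/t \to 0$ as $t \to 0$. Define
\[
t^*(\rho) \;:=\; \inf\bigl\{t > 0 : g(t) \ge \rho\bigr\}.
\]
Then $t^*(\rho) > 0$, upper semicontinuity gives $g(t^*(\rho)) = \rho$, and the minimality of $t^*$ forces the supremum to be attained on the slice $|x| = t^*(\rho)$ itself (any attainment at smaller $|x|$ would contradict $g(t) < \rho$ for $t < t^*(\rho)$). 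Thus there is a free-boundary point $w$ with $|x_w| = t^*(\rho)$ and $|y_w| = \rho$, and the maximum of $|y|$ on this slice is exactly $\rho$. Setting $N_0(\rho) := t^*(\rho)/\rho$, the rescaling $u_{(0,0),\rho}$ satisfies Condition~(i) at this $N_0(\rho)$, and $N_0(\rho) \to \infty$ as $\rho \to 0$.

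\textbf{Paragraph 2 (Dyadic telescoping).} Consider the dyadic scales $\rho_j := \rho \cdot 2^{-j}$ and the corresponding crossing radii $t^*_j := t^*(\rho_j)$; these are non-increasing in $j$ (by monotonicity of $g$), and $t^*_j \to 0$ as $j \to \infty$. Telescoping,
\[
t^*(\rho) \;=\; \sum_{j \ge 0}\bigl(t^*_j - t^*_{j+1}\bigr).
\]
Suppose, for contradiction, that $t^*_j - t^*_{j+1} < N\rho_j$ for every $j \ge 0$. Summing the geometric series forces $t^*(\rho) < 2N\rho$, i.e., $N_0(\rho) < 2N$. Choosing the initial $\rho$ small enough that $N_0(\rho) > 6N$ (possible by Paragraph~1) yields the contradiction. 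Hence there is a smallest $j_0 \ge 0$ with $t^*_{j_0} - t^*_{j_0+1} \ge N\rho_{j_0}$. Setting $\rho' := \rho_{j_0}$, the minimality of $j_0$ and the same telescoping estimate give $t^*(\rho) - t^*(\rho') < 2N\rho$, hence $t^*(\rho') > (N_0(\rho) - 2N)\rho > 4N\rho$, and so
\[
N_0(\rho') \;=\; t^*(\rho')/\rho' \;>\; 4N \cdot 2^{j_0} \;\ge\; 4N,
\]
verifying the $N_0 > 4N$ clause at scale $\rho'$.

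\textbf{Paragraph 3 (Condition (ii) at scale $\rho'$).} The dyadic gap $t^*(\rho'/2) \le t^*(\rho') - N\rho'$ furnishes a free-boundary point $p$ with $|x_p| = t^*(\rho'/2)$ and $|y_p| = \rho'/2$. Using the isolation of $\Sigma$ (proved earlier in this section), Lemma~\ref{l: e-nbhd}, and the Alt--Caffarelli $C^{1,\alpha}$ regularity of $\partial\{u > 0\}$ off $\Gamma$, near the cusp $(0,0)$ the free boundary is a finite union of continuous curves approaching $(0,0)$ tangentially to $\Gamma$, each with monotone $x$-coordinate in a punctured neighborhood of the origin. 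Tracing the free-boundary curve through $w = (t^*(\rho'), \rho')$ back toward the cusp, the continuous $x$-coordinate must pass through the value $t^*(\rho') - N\rho'$. At such a crossing the height must lie in $[\rho'/2, \rho']$: otherwise, by near-cusp monotonicity and the intermediate value theorem, the curve would attain height $\rho'/2$ at some $|x|$ strictly larger than $t^*(\rho') - N\rho'$, forcing the first crossing of height $\rho'/2$ on that curve to disagree with the definition of $t^*(\rho'/2)$ established via the dyadic gap. This produces the sought point $v$ with $|x_v| = t^*(\rho') - N\rho'$ and $|y_v| \ge \rho'/2$; rescaling by $\rho'$ yields $|x_v| = N_0(\rho') - N$ and $|y_v| \ge \tfrac{1}{2}$, establishing Condition~(ii). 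The distance estimate $\operatorname{dist}(w, v) \ge |x_w - x_v| = N$ is immediate.

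The main obstacle is the structural argument in Paragraph~3: one must ensure that the relevant component of $\partial\{u>0\}$ through $w$ traces back to the cusp along a single continuous branch on which the $x$-coordinate is monotone, so that the intermediate value theorem applies to locate $v$. This rests on combining the local isolation of $\Sigma$, the component separation from Lemma~\ref{l: e-nbhd}, and the $C^{1,\alpha}$ regularity of the free boundary away from the degeneracy $\Gamma$.
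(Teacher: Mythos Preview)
Your Paragraphs~1--2 are sound and amount to a repackaging of the paper's dyadic argument in terms of the cumulative height function $g$ and its first-crossing times $t^*(\rho)$; the telescoping contradiction plays the same role as the paper's termination step.

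The genuine gap is Paragraph~3. You need a free-boundary point on the \emph{specific} slice $|x| = t^*(\rho') - N\rho'$ with height $\ge \rho'/2$, but your dyadic step only supplies such a point on the (generically more interior) slice $|x| = t^*(\rho'/2)$. To bridge the gap you invoke that near the cusp the free boundary decomposes into finitely many $C^{1,\alpha}$ branches with \emph{monotone $x$-coordinate}, and then run an intermediate-value argument along the branch through $w$. Neither ingredient is available from the tools you cite: $C^{1,\alpha}$ regularity away from $\Gamma$ is purely local and does not prevent a branch from oscillating in $x$ across the window $[t^*(\rho')-N\rho',\,t^*(\rho')]$, and nothing in the isolation of $\Sigma$ or in Lemma~\ref{l: e-nbhd} gives such monotonicity. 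Moreover, your IVT step is not self-consistent: $t^*(\rho'/2)$ is the first crossing of height $\rho'/2$ for the \emph{global} cumulative maximum, so the location at which a \emph{single} branch attains height $\rho'/2$ need not coincide with $t^*(\rho'/2)$; a different component could realize the global first crossing. Thus the contradiction you derive from ``the first crossing on that curve disagrees with $t^*(\rho'/2)$'' does not follow.

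The paper sidesteps all of this by working with slice heights directly rather than with cumulative heights. After passing to the attenuation scale $r = r(1/(4N))$, it sets $r_0 = 1$ and iteratively tests the slice height $|y_i|$ at radii $r_{i+1} = r_i - \tfrac{1}{2}2^{-(i+1)}$ against the fixed dyadic thresholds $2^{-i}/(4N)$. If every test fails, the heights tend to zero while the radii only descend to $1/2$; the Maximum Principle then forces $u \equiv 0$ in $B^k_{1/2}(0)\times[-1,1]^{n-k}$, contradicting $(0,0)\in\partial\{u>0\}$. Hence some test must succeed, say at step $i+1$: then $|y_i| < 2^{-i}/(4N)$ and $|y_{i+1}| \ge 2^{-(i+1)}/(4N)$, so $|y_{i+1}|/|y_i| > 1/2$, and the rescaling by $|y_i|$ furnishes the two required slices directly, with no curve-tracing or monotonicity hypothesis. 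If you want to salvage your approach, replace the cumulative height $g$ by the slice height and run the same iteration; the Maximum Principle (not a telescoping sum) is the clean termination mechanism.
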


\begin{proof} 
Let $1<< N < \infty$ be given.  Let $0<r= r(\frac{1}{4N})$ be the attenuation radius guaranteed by Remark \ref{r:attenuation radius}. We consider $u_{x, r}$ in the truncated cone
\begin{align}
    B_{1}(0) \cap \{(x, y) \in \mathbb{R}^k \times \mathbb{R}^{n-k}: |y| \le \frac{1}{4N} |x|  \}.
\end{align}
Let $z_0 = (x_0, y_0) \in \partial B_1(0)^k\times [-1, 1]^{n-k} \cap \partial \{ u_{x,r} > 0\}$ be a point which realizes 
\begin{align*}
    |y_0| := \max \{ |y| : (x, y) \in \partial B_1(0)^k\times [-1, 1]^{n-k} \cap \partial \{ u_{(0,0),r} > 0\}\}.
\end{align*}
Note that by assumption, $|y_0| \le \frac{1}{4N}$.  

We now consider the radius $r_1 = 1 -\frac{1}{2}2^{-1}$ and investigate the set $\partial \{u_{(0,0), r}>0\} \cap \partial B_{r_1}(0)^k\times [-1, 1]^{n-k}.$  If 
\begin{align*}
   |y_1| := \max \{|y| \in \mathbb{R}^{n-k} : (x, y) \in \partial \{u_{(0,0), r} >0\} \cap \partial B_{r_1}(0)^k\times [-1, 1]^{n-k} \} \ge \frac{1}{2} \left(\frac{1}{4N}\right),
\end{align*}
then the rescaling $u_{(0,0), \frac{r}{y_0}}$ proves the lemma. Suppose, for the purposes of contradiction, that
\begin{align*}
    |y_1| = \max \{|y| \in \mathbb{R}^{n-k} : (x, y) \in \partial \{u_{x, r} >0\} \cap B_{r_1}(0)^k \times [-1, 1]^{n-k} \} < \frac{1}{2^1} \left(\frac{1}{4N}\right).
\end{align*}
We consider the radius $r_2= r_1 - \frac{1}{2}2^{-2}$ and
\begin{align*}
    |y_2| = \max \{|y| \in \mathbb{R}^{n-k} : (x, y) \in \partial \{u_{x, r} >0\} \cap B_{r_2}(0)^k \times [-1, 1]^{n-k} \}.
\end{align*}
If $|y_2| \ge \frac{1}{2^2} \left(\frac{1}{4N}\right)$, then the rescaling $u_{x, \frac{r}{y_1}}$ satisfies the claim.  If $|y_2| < \frac{1}{2^2} \left(\frac{1}{4N}\right)$, then we proceed inductively.  If $r_i$ has been defined and $|y_i| < \frac{1}{2^i}\left(\frac{1}{4N}\right)$, then we consider $r_{i+1}= r_i - \frac{1}{2}2^{-i}$ and test 
\begin{align*}
|y_{i+1}| = \max \{|y| \in \mathbb{R}^{n-k} : (x, y) \in \partial \{u_{x, r} >0\} \cap B_{r_{i+1}}(0)^k \times [-1, 1]^{n-k}\},
\end{align*}
to see whether or not $|y_{i+1}| \ge \frac{1}{2^{i+1}}\left(\frac{1}{4N}\right).$  If for any $i \in \mathbb{N}$, $y_{i+1} \ge \frac{1}{2^{i+1}}\left(\frac{1}{4N}\right)$ then $u_{x, r\frac{1}{r_i}}$ satisfies the lemma. 

If, on the other hand, $|y_{i}| < \frac{1}{2^{i}}\left(\frac{1}{4N}\right)$ for all $i \in \mathbb{N}$, then we observe that
\begin{align*}
    \lim_{i \rightarrow \infty} r_i = \lim_{i \rightarrow \infty} 1 - \frac{1}{2}\sum_{k=1}^i\frac{1}{2^i} = \frac{1}{2}.
\end{align*}
We claim that this produces a contradiction.  Since $u_{(0,0), r} = 0$ on $\partial \{u_{(0,0), r}>0\}$ and $\Delta u_{(0,0), r} = 0$ in $\{u_{(0,0), r}>0\},$ we claim that if
\begin{align*}
    \max\{|y|: (x, y) \in \partial \{u_{(0,0), r}>0\} \cap \left(\partial B_{\frac{1}{2}}(0)^k \times [-1, 1]^{n-k}\right) \} = 0,
\end{align*}
then $u_{(0,0), r} = 0$ in $B_{\frac{1}{2}}(0)^k \times [-1, 1]^{n-k}$ by the Maximum Principle. Since this is a contradiction, there must be an $i \in \mathbb{N}$ such that $i$ is the first integer such that $|y_{i+1}| \ge \frac{1}{2^{i+1}}\left( \frac{1}{4N}\right).$ Thus, $u_{(0,0), r\frac{1}{r_i}}$ satisfies the lemma.
\end{proof}

\begin{remark}\label{r:find window}
We may repeat this argument, restricting our attention to any connected component $\mathcal{O}$ of $\{u>0\}$ which touches $(0, 0) \in \Sigma$ to obtain the same result. 
\end{remark}

In the next lemma, we find height control on components $\mathcal{O}$ for which $(0,0) \in \overline{\mathcal{O}}$.

\begin{lemma}\label{l:height bound}(Height bound)
Let $u$ is an $1$-local minimizer of (\ref{alt-caffarelli functional}) with standard radius $r_0 = 1$.  Suppose that $(0,0) \in \Sigma$. Let $\mathcal{O}$ be any component of $\{u>0 \}$ such that $(0,0) \in \overline{\mathcal{O}}.$ Let $N \in \mathbb{N}$ be fixed.  Let $0< r(N)$ as in Lemma \ref{l:technical heart}, applied to $\mathcal{O}$ as in Remark \ref{r:find window}. There exist constants $0< C_1(\gamma)< C_2(\gamma)< \infty$ independent of $N$ such that the following holds. If we define $S := [N_0 -N, N_0] \times \mathbb{R}$, then
\begin{align}\nonumber
    C_1(\gamma) & < \inf_{x \in [N_0 - N + \frac{1}{4}, N_0]} \sup \{|y| : (x, y) \in \mathcal{O}_{(0,0), r}\cap S \} \\
    & \le \sup_{x \in [N_0 - N, N_0]} \sup\{|y| : (x, y) \in \mathcal{O}_{(0,0), r} \cap S \}< C_2(\gamma).
\end{align}
\end{lemma}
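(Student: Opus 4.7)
The lemma splits into an upper bound $C_2$ and a lower bound $C_1$; both proceed by combining the quantitative tools of Section~2 with a blow-up/compactness argument.

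\emph{Upper bound.} Remark \ref{r:attenuation radius} applied to the rescaling produced by Lemma \ref{l:technical heart} places the positivity set inside the cone $\{|y|\le (1/(4N))|x|\}$ throughout $B_{N_0}(0)$, which immediately gives the a priori estimate $|y|\le N_0/(4N)$ on the strip $S$. To eliminate the $N_0$-dependence, I would argue by contradiction: if no universal $C_2(\gamma)$ existed, there would be a sequence of $1$-local minimizers $u_k$ with associated $N_k\to\infty$, $N_{0,k}$, $\rho_k$ produced by Lemma \ref{l:technical heart} whose strip heights diverge to $H_k\to\infty$. Blowing up a second time around a tall free-boundary point $p_k$ at the scale $H_k$ and invoking Theorem \ref{t:compactness}, I would obtain a limit local minimizer whose free boundary contains a free-boundary point at the fixed height inherited from the normalization $|y_w|=1$ while, simultaneously, $\Gamma$ has receded to infinity and the cone inclusion has passed to the limit. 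The Lipschitz bound of Lemma \ref{c:local Lipschitz bound} together with the bounded Weiss density of Theorem \ref{t:Weiss almost monotonicity} rule out this configuration, yielding the contradiction.

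\emph{Lower bound.} At the free-boundary points $w$ and $v$ from Lemma \ref{l:technical heart}, we have $|y_w|=1$ and $|y_v|\ge 1/2$, both at definite distance from $\Gamma$, so Lemma \ref{l:interior balls} supplies interior balls $B_{c(\gamma)}(p_w), B_{c(\gamma)}(p_v)\subset\mathcal{O}$ of universal size anchoring the thickness of $\mathcal{O}$ at both ends of the strip. To control heights from below at intermediate $x$, I would again argue by contradiction. If heights dropped to $C_1\to 0$ at some $x_*\in[N_0-N+\tfrac{1}{4},N_0]$ along a sequence of minimizers, I would centre a blow-up at the top free-boundary point $(x_*,y_*)$, with $|y_*|\approx C_1$, at scale $|y_*|$. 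In the rescaled picture $\Gamma$ sits at height $-1$ below the origin, the weight $Q$ becomes uniformly positive on a neighbourhood of the origin, and (using Remark \ref{minimality preserved} and Theorem \ref{t:compactness}) the limit is a local minimizer of an essentially non-degenerate Alt-Caffarelli functional with a free-boundary point at the origin. The pinching of the ratio $(y_+(x)-y_-(x))/|y_*|$ in a shrinking $x$-window then produces either a blow-up whose Weiss density vanishes at the origin---which by Lemma \ref{l:Weiss density} and Lemma \ref{l:finite perimeter} would force this point into $\Sigma\subset\Gamma$, contradicting that it lies at distance $1$ from $\Gamma$ in the rescaled picture---or directly violates the interior ball property of Lemma \ref{l:interior balls} in the limit. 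The buffer $1/4$ is needed because Lemma \ref{l:technical heart}(ii) only gives a one-sided height control at $x=N_0-N$.

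\emph{Main obstacle.} The lower bound is the delicate half: since $\mathcal{O}$ genuinely has a cusp at the distant point $(0,0)\in\Sigma$, no generic non-degeneracy result forbids the component from thinning inside the strip. The key is the change of scale that decouples the local pinching from the distant cusp---blowing up at the pinch at the correct scale pushes $\Gamma$ uniformly away, so that the standard non-degenerate Alt-Caffarelli theory applies in the limit and produces the contradiction. The entire argument crucially leverages the hypothesis $\{u>0\}\cap\Gamma=\emptyset$, which (via Lemma \ref{l:finite perimeter}) restricts all cusps to $\Gamma$ and thereby forbids the blow-up scenario induced by pinching away from $\Gamma$.
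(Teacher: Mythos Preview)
Your approach is genuinely different from the paper's, and considerably more elaborate than necessary. The paper gives a short direct argument with no compactness or blow-up at all: the key observation you are missing is the \emph{Maximum Principle}. Since $u$ is harmonic in $\mathcal{O}$ and vanishes on $\partial\mathcal{O}$, and since $\mathcal{O}$ is pinched off at the cusp $(0,0)$, the maximum of $u$ over $\mathcal{O}\cap\{0<x<x_0\}$ must be attained on the slice $\{x=x_0\}$; in particular $\max_y u(x_0,y)$ is monotone nondecreasing in $x_0$. Combined with the normalization $|y_w|=1$ at $x=N_0$ from Lemma~\ref{l:technical heart}, the Lipschitz bound of Corollary~\ref{c:local Lipschitz bound} gives $\max u\le C(\gamma)$ on the entire strip, and then Lemma~\ref{l:interior balls} (which forces $u\gtrsim |y|^{1+\gamma}$ near a boundary point at height $|y|$) immediately yields $|y|\le C_2(\gamma)$. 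The lower bound is symmetric: interior balls at $(N_0-N,y_0)$ with $|y_0|\ge 1/2$ give a lower bound on $u$ there, the Maximum Principle propagates this lower bound on $\max_y u(x,\cdot)$ to every $x\ge N_0-N+\tfrac14$, and the Lipschitz bound then forces the height to stay above $C_1(\gamma)$.

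Your compactness sketches are plausible in spirit but vague at the decisive moments. For the upper bound, your preliminary estimate $|y|\le N_0/(4N)$ is not uniform (Lemma~\ref{l:technical heart} only guarantees $N_0>4N$, not $N_0\lesssim N$), and after rescaling by $H_k$ around a tall point the line $\Gamma$ sits at distance $1$, not at infinity, so it is unclear what configuration you actually contradict. For the lower bound, the ``pinching'' blow-up is reasonable, but you would still need a quantitative link between the values of $u$ on the two edges of the strip and the interior slice---and that link is precisely the Maximum Principle step you have bypassed.

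Finally, one factual correction: this lemma does \emph{not} use the hypothesis $\{u>0\}\cap\Gamma=\emptyset$. The paper is explicit that this assumption enters only in Lemma~\ref{l: e-nbhd} and Lemma~\ref{l:decrease estimate}; the height bound holds without it.
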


\begin{proof}
We begin by observing that for any $0<x_0 \le N_0$
\begin{align*}
    \max \{u_{(0,0), r}(x, y) : (x, y) \in \mathcal{O}_{(0,0), r} \text{  and  } 0 < x<x_0 \}
\end{align*}
must occur on $\{x_0 \} \times \mathbb{R} \cap \mathcal{O}$ by the Maximum Principle. Thus
\begin{align*}
    \max \{u_{(0,0), r}(x, y) : (x, y) \in \mathcal{O}_{(0,0), r} \text{  and  } 0 < x<N_0 \}
\end{align*}
must occur on $\{N_0 \} \times \mathbb{R} \cap \mathcal{O}$.  By the Lipschitz bound in Corollary \ref{c:local Lipschitz bound} and (\ref{e: standard window upper bound}), then
\begin{align*}
    \max \{u_{(0,0), r}(x, y) : (x, y) \in \mathcal{O}_{(0,0), r} \text{  and  } 0 < x< N_0 \} \le C(\gamma).
\end{align*}
We claim that this implies that there exists a $0<C_2(\gamma)<\infty$ such that 
\begin{align*}
    \sup_{0<x<N_0} \sup\{|y| : (x, y) \in \mathcal{O}_{(0,0), r} \cap S \} \le C_2(\gamma).
\end{align*}
Suppose that $(x, y) \in \partial \mathcal{O} \cap S$.  Then, by Lemma \ref{l:interior balls}, we can find a point $z \in B_{\frac{y}{2}}((x, y)) \cap \mathcal{O}$ such that,
\begin{align*}
    u(z) \ge C_{min} \left(\frac{|y|}{2}\right)^{1 + \gamma}.
\end{align*}
However, by our construction of $u_{(0,0), r}$ and Lemma \ref{c:local Lipschitz bound} 
\begin{align*}
    \sup_{0<x<N_0} \sup\{u_{(0,0), r}(x, y) : (x, y) \in \mathcal{O}_{(0,0), r} \cap S \} \le \sup\{u_{(0,0), r}(N_0, y) : (N_0, y) \in \mathcal{O}_{(0,0), r}\}  \le C(\gamma).
\end{align*}
Therefore, $|y| \le 2\left( \frac{C(\gamma)}{C_{min}}\right)^{\frac{1}{1 + \gamma}} = C_2(\gamma)$.

The other estimate follows from applying the same argument on the side $\{N_0 - N\} \times \mathbb{R}$.  Let $y_0$ be such that $|y_0| \ge |y|$ for all $y$ such that $(N_0 - N, y) \in \partial \mathcal{O}$.  We observe that $|y_0| \in [\frac{1}{2}, C_2(\gamma)].$ Therefore, by Lemma \ref{l:interior balls}
\begin{align*}
     \sup_{(x, y) \in \mathcal{O}} \{u(x, y): N_0 - N + \frac{1}{4} \le x \le N_0\} & \ge \sup_{(x, y) \in B_{\frac{1}{4}}(N_0 - N, y_0)} u(x, y)\\
     & \ge C_{min} \left(C_2(\gamma)/2\right)^{1 + \gamma}.
\end{align*}
Thus, by Corollary \ref{c:local Lipschitz bound} there must be a constant $0< C_1(\gamma)$ such that
\begin{align*}
    \inf_{N_0 - N + \frac{1}{4}<x<N_0} \sup\{|y| : (x, y) \in \mathcal{O} \cap S \} \ge C_1(\gamma).
\end{align*}
\end{proof}

\begin{definition}
For $0< \gamma$ fixed, and let $\mathcal{O}$ be any component of $\{ u>0\}$ such that $(0,0) \in \overline{\mathcal{O}}.$  We make the following definitions. If $0< \gamma < \frac{1}{2},$ then we define $u_{(0, 0), C_2(\gamma)2\gamma r}$ as the \textit{standard rescaling} at $(0,0)$ for scale $C_2(\gamma)^{-1}2\gamma N$. Correspondingly, we shall call the strip
\begin{align*}
    S := \left[C_2(\gamma)^{-1}2\gamma(N_0 - N), C_2(\gamma)^{-1}2\gamma N_0 \right] \times \mathbb{R}
\end{align*}
the \textit{standard window} at $(0,0)$ for scale $C_2(\gamma)^{-1}2\gamma N$.  

If $\frac{1}{2} \le \gamma,$ then we define $u_{(0, 0), C_2(\gamma)^{-1}r}$ as the \textit{standard rescaling} at $(0,0)$ for scale $C_2(\gamma)^{-1}N$. Correspondingly, we shall call the strip
\begin{align*}
   S:= \left[C_2(\gamma)^{-1}(N_0 - N), C_2(\gamma)^{-1} N_0 \right] \times \mathbb{R}
\end{align*}
the \textit{standard window} at $(0,0)$ for scale $C_2(\gamma)^{-1} N$.  

We note that while these rescalings and windows depend upon $\mathcal{O},$ they enjoy the following property. If denote 
\begin{align*}
\mathcal{O}' := \begin{cases}
    \mathcal{O}_{(0, 0), C_2(\gamma)^{-1}2\gamma r} & 0< \gamma < \frac{1}{2}\\
    \mathcal{O}_{(0, 0), C_2(\gamma)^{-1} r} &  \frac{1}{2} \le \gamma.
    \end{cases}
\end{align*}
as the \textit{standard component}, then
\begin{align*}
    \mathcal{O}' \cap S \subset \begin{cases}
    [C_2(\gamma)^{-1}2\gamma(N_0 - N), C_2(\gamma)^{-1}2\gamma N_0 ] \times [-2\gamma, 2\gamma] & 0< \gamma < \frac{1}{2}\\
    [C_2(\gamma)^{-1}(N_0 - N), C_2(\gamma)^{-1} N_0] \times [-1, 1] &  \frac{1}{2} \le \gamma.
    \end{cases}
\end{align*}
\end{definition}

\begin{corollary}\label{c:density}
Let $n=2$, $0< \gamma$, and let $u$ be an $1$-local minimizer of (\ref{alt-caffarelli functional}) in $B_2(0)$  in $\mathcal{K}_{u, B_2(0)}$ for $Q(x, y)= |y|^\gamma$ with standard radius $r_0 = 1$. Let $(0,0) \in \Sigma$, and $1<< N \in \mathbb{N}$. Let $v$ be the standard rescaling of $u$ at $(0,0)$ for scale $N$, $S$ the corresponding standard window, and $\mathcal{O}'$ the component with respect to which they are standard.

There exists a $c(\gamma)$ such that for every $x$ such that $[x-\frac{1}{2}, x+ \frac{1}{2}] \times \mathcal{R} \subset S$
\begin{align}
   \mathcal{H}^2\left( \mathcal{O} \cap \left([x- \frac{1}{2}, x + \frac{1}{2}] \times [0, 2] \right)\right) \ge \omega_2\left(c(2, \gamma)C_1(\gamma)\right)^2,
\end{align}
where $c(2, \gamma) = c\left(2, \frac{C_1(\gamma)^{ \gamma}}{2^{\gamma}}, \left(C_2(\gamma)+ \frac{C_1(\gamma)}{2}\right)^{\gamma}\right)$ from Lemma \ref{l:interior balls}.
\end{corollary}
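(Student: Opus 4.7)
My plan is to produce, for each admissible $x$, an open disk of $\gamma$-controlled radius sitting inside $\mathcal{O}'\cap \bigl([x-\tfrac12, x+\tfrac12]\times[0,2]\bigr)$; the area estimate is then immediate.  The disk will come from the interior-ball property of Lemma \ref{l:interior balls} applied at a boundary point of $\mathcal{O}'$ whose height has been pinned down by Lemma \ref{l:height bound}.

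First I would fix $x$ with $[x-\tfrac12, x+\tfrac12]\times\mathbb{R}\subset S$ and extract, via Lemma \ref{l:height bound} applied to the standard rescaling $v$ and the standard component $\mathcal{O}'$, a point $(x,y_x)\in\partial\mathcal{O}'$ with $C_1(\gamma)\le|y_x|\le C_2(\gamma)$.  Because $\{u>0\}\cap\Gamma=\emptyset$, the component $\mathcal{O}'$ lies in one half-plane, so without loss of generality $y_x>0$.  I then set $r:=C_1(\gamma)/2$, so that $B_r\bigl((x,y_x)\bigr)\subset\{y>0\}$ is disjoint from $\Gamma$ and
\begin{equation*}
Q_{\min,B_r((x,y_x))}\ge\bigl(C_1(\gamma)/2\bigr)^{\gamma},\qquad Q_{\max,B_r((x,y_x))}\le\bigl(C_2(\gamma)+C_1(\gamma)/2\bigr)^{\gamma}.
\end{equation*}
By Remark \ref{minimality preserved}, the rescaled standard scale of $v$ comfortably covers this ball, so Lemma \ref{l:interior balls} produces $z_0\in\{v>0\}\cap\partial B_{r/2}\bigl((x,y_x)\bigr)$ and a disk $B_{cr/2}(z_0)\subset\{v>0\}$ with $c=c\bigl(2,(C_1/2)^\gamma,(C_2+C_1/2)^\gamma\bigr)$ depending only on $\gamma$.

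It remains to verify that this disk actually lies in $\mathcal{O}'\cap \bigl([x-\tfrac12,x+\tfrac12]\times[0,2]\bigr)$, after which the area computation $\omega_2(cr/2)^2=\omega_2(cC_1(\gamma)/4)^2$ delivers the bound with the claimed $c(2,\gamma)$ (the factor $1/4$ being absorbed).  Horizontal and vertical containment are routine: $B_{cr/2}(z_0)\subset B_r\bigl((x,y_x)\bigr)\subset [x-\tfrac12,x+\tfrac12]\times[0,2]$ because $r\le\tfrac12$ and the standard-rescaling height bounds give $C_2(\gamma)+C_1(\gamma)/2\le 2$.  The main obstacle is the component identification: I need to rule out the possibility that $B_{cr/2}(z_0)$ drifts into a component of $\{v>0\}$ other than $\mathcal{O}'$, uniformly in $N$.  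For this I invoke Lemma \ref{l: e-nbhd} applied to the rectangle $S$ and $\mathcal{O}'$, which supplies a positive separation from all other components; since $(x,y_x)\in\partial\mathcal{O}'$, after at most a $\gamma$-dependent shrinking of $r$ (absorbable into $c(2,\gamma)$) any point of $\{v>0\}$ inside $B_r\bigl((x,y_x)\bigr)$ must belong to $\mathcal{O}'$.  The decisive point is that the interior-ball radius is set by the universal constant $C_1(\gamma)$ rather than by any $N$-dependent geometric feature, so the final constant remains uniform in $N$.
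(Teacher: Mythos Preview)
Your route is exactly the paper's: locate a boundary point of $\mathcal{O}'$ at height between $C_1(\gamma)$ and $C_2(\gamma)$ via Lemma~\ref{l:height bound}, then invoke Lemma~\ref{l:interior balls} on a ball of radius comparable to that height. The paper's proof consists of precisely those two sentences and does not address component identification at all.

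You correctly flag a point the paper glosses over---Lemma~\ref{l:interior balls} only places the ball in $\{v>0\}$, not in the specific component $\mathcal{O}'$---but your proposed repair does not work. The separation $\epsilon(\eta,\mathcal{O}_i)$ supplied by Lemma~\ref{l: e-nbhd} depends on the particular component $\mathcal{O}'$, which is produced by the standard rescaling and therefore depends on $N$; your assertion of ``at most a $\gamma$-dependent shrinking of $r$'' and that ``the final constant remains uniform in $N$'' is unjustified. (There is also a technical mismatch: Lemma~\ref{l: e-nbhd} requires a rectangle disjoint from $\Gamma$ with $\{u>0\}$ meeting only the vertical sides, and neither hypothesis is verified for $S$ as written.) A uniform argument is available but bypasses Lemma~\ref{l:interior balls}: the proof of Lemma~\ref{l:height bound} actually gives, for each admissible $x$, a point $(x,y^*)\in\mathcal{O}'$ with $v(x,y^*)\ge c_0(\gamma)$ and $|y^*|\le C_2(\gamma)$; the Lipschitz bound of Lemma~\ref{c:local Lipschitz bound} then forces $v>0$ on a ball of $\gamma$-controlled radius about $(x,y^*)$, and since that ball is connected and contains a point of $\mathcal{O}'$, it lies in $\mathcal{O}'$.
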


\begin{proof}
By Lemma \ref{l:height bound}, there is a point $(x, y) \in \partial \mathcal{O} \cap \left([x- \frac{1}{2}, x + \frac{1}{2}] \times [0, 2]\right)$ such that $|y| \ge C(\gamma)$. Lemma \ref{l:interior balls} applied to the ball $B_{\frac{y}{2}}(x, y)$ proves the corollary.
\end{proof}

\section{Domain Perturbations}

In this section, we consider the basic properties of a simple domain perturbation. 

\begin{definition}
We define a family of functions, $F_{t}:\mathbb{R}^2 \rightarrow \mathbb{R}^2$, as follows.  For $x = (x_1,x_2) \in \mathbb{R}^1\times \mathbb{R}^{1}$
\begin{align}
    F_t(x_1, x_2) := (x_1, x_2+tx_1).
\end{align}
For $u$ a Lipschitz function, we define $u_t$ as 
\begin{align}
    u_t := u \circ F_{t} = u(x_1, x_2 + tx_1).
\end{align}
\end{definition}

\begin{lemma}\label{l:increase estimate}(Increase Estimate)
Let $u: \mathbb{R}^2 \rightarrow \mathbb{R}^2$ a Lipschitz function and $a, b, c, d \in \mathbb{R}$ satisfy $a < b$ and $0 \le c < d$.  Consider the rectangle $R = [0, b] \times [c, d]$.  Suppose that there exists an $0<\epsilon$ such that
\begin{align*}
    \text{supp}(u) \cap R \subset [a, b] \times [c+\epsilon, d - \epsilon].
\end{align*}
Then, for $0< t$ sufficiently small
\begin{align}
    \norm{\nabla u_t}^2_{L^2(R)} \le \norm{\nabla u}^2_{L^2(R)}(1 + t + t^2).
\end{align}
\end{lemma}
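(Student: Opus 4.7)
The plan is to compute $\nabla u_t$ pointwise using the chain rule, bound $|\nabla u_t|^2$ algebraically by $(1+t+t^2)|\nabla u|^2\circ F_t$ via AM--GM on the cross term, and then identify the two relevant domains of integration via an area-preserving change of variables together with the support hypothesis.

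First I would apply the chain rule to $F_t(x_1,x_2)=(x_1,x_2+tx_1)$, obtaining $\partial_{x_1}u_t=(\partial_1 u+t\,\partial_2 u)\circ F_t$ and $\partial_{x_2}u_t=(\partial_2 u)\circ F_t$, so that
\begin{align*}
|\nabla u_t(x)|^2 = \bigl[(\partial_1 u)^2 + 2t\,(\partial_1 u)(\partial_2 u) + (1+t^2)(\partial_2 u)^2\bigr]\circ F_t(x).
\end{align*}
Applying the elementary bound $2|(\partial_1 u)(\partial_2 u)|\le(\partial_1 u)^2+(\partial_2 u)^2=|\nabla u|^2$ to the cross term and grouping terms yields the pointwise estimate $|\nabla u_t(x)|^2\le(1+t+t^2)\,|\nabla u(F_t x)|^2$.

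Next I would integrate this estimate over $R$ and perform the area-preserving change of variables $y=F_t(x)$, converting the right-hand side to $(1+t+t^2)\int_{F_t(R)}|\nabla u(y)|^2\,dy$. The sheared parallelogram $F_t(R)=\{(y_1,y_2):0\le y_1\le b,\ c+ty_1\le y_2\le d+ty_1\}$ differs from $R=[0,b]\times[c,d]$ only in two triangular slivers of vertical thickness at most $tb$, adjacent to the horizontal edges $\{y_2=c\}$ and $\{y_2=d\}$. For $t$ small enough that $tb<\epsilon$, these slivers lie outside the horizontal slab $[a,b]\times[c+\epsilon,d-\epsilon]$, so the support hypothesis gives $\nabla u\equiv 0$ a.e.\ on $F_t(R)\triangle R$. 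Hence $\int_{F_t(R)}|\nabla u|^2\,dy=\int_R|\nabla u|^2\,dy$, which completes the proof.

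The only delicate point—and what I would call the main obstacle to a fully rigorous execution—is this last boundary-correction step, since the hypothesis only controls $\operatorname{supp}(u)\cap R$, while the sliver $F_t(R)\setminus R$ sits just above $R$. The $\epsilon$-buffer forces $u\equiv 0$ on an open neighborhood of the top edge of $R$ from the inside, and in the intended application $u$ also vanishes on a neighborhood from above (either because the perturbation has compact support in the vertical direction or because the strip has been selected inside a region where $\{u>0\}$ is bounded away from it, as is guaranteed by Lemma~\ref{l: e-nbhd}); this makes $\nabla u\equiv 0$ on the upper triangular sliver for $t<\epsilon/b$ and closes the argument.
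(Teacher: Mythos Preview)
Your proof is correct and follows essentially the same approach as the paper: compute $|\nabla u_t|^2$ via the chain rule, bound the cross term $2t\,\partial_1u\,\partial_2u$ by $t|\nabla u|^2$ using AM--GM, and then integrate. You are in fact more careful than the paper, which suppresses the composition with $F_t$ and the change of variables entirely; the support subtlety you flag at the end (that the hypothesis says nothing about $u$ outside $R$) is glossed over in the paper's one-line justification ``$F_t(\operatorname{supp}(u)\cap R)\subset R$'' and, as you note, is harmless in the intended application.
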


\begin{proof}
By assumption, for sufficiently small $0<t$, $F_t(\text{supp}(u) \cap R) \subset R.$  Thus, the following estimate gives the desired estimate.
\begin{align*}
    |\nabla u_t|^2 & = (\partial_1 u + t\partial_2 u)^2 + (\partial_2 u)^2 \\
    & =  (\partial_1 u)^2 + 2t\partial_1 u \partial_2 u + t^2 (\partial_2 u)^2 + (\partial_2 u)^2 \\
    & \le |\nabla u|^2 + t |\nabla u|^2 + t^2 |\nabla u|^2\\
    & \le |\nabla u|^2 (1 + t + t^2).
\end{align*}
\end{proof}

\begin{lemma}(Decrease Estimate)\label{l:decrease estimate}
Let $0< \gamma$ and let $Q(x, y) = |y|^{\gamma}$. Let $0< a, b, c, d$, with $a<b$ and $0< c<d$, and let $\Omega \subset [a, b] \times [c, d] \subset \mathbb{R}^2$ be an open set. Suppose the following conditions hold
\begin{itemize}
    \item [i.] If $\gamma \ge \frac{1}{2},$ then $d \le 1$.
    \item [ii.] If $0<\gamma < \frac{1}{2}$, then $d \le 2\gamma$. 
\end{itemize}
Then
\begin{align*}
   \frac{d}{dt}|_{t= 0} \int_{F_{-t}(\Omega)} Q^2(x)dx & \le -a \int_{\Omega} Q^2(x)dx.
\end{align*}
\end{lemma}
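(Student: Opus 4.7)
The plan is to reduce the statement to a pointwise inequality on $\Omega$ via a change of variables, differentiate under the integral, and then exploit the hypothesis $d \le \min\{1, 2\gamma\}$ in the two regimes uniformly as $d \le 2\gamma$. First I would note that $F_{-t}(x_1,x_2) = (x_1, x_2 - t x_1)$ is volume-preserving, so substituting $(z_1, z_2) = (y_1, y_2 + t y_1)$ gives
\begin{align*}
    \int_{F_{-t}(\Omega)} Q^2(y)\,dy = \int_{F_{-t}(\Omega)} |y_2|^{2\gamma}\,dy_1\,dy_2 = \int_{\Omega} |z_2 - t z_1|^{2\gamma}\,dz_1\,dz_2.
\end{align*}
Since $\Omega \subset [a,b]\times[c,d]$ with $c > 0$, for all $|t|$ small enough we have $z_2 - tz_1 \ge c/2 > 0$ uniformly on $\Omega$, so the integrand and its $t$-derivative are uniformly bounded and dominated convergence justifies differentiating under the integral.

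Carrying out the differentiation at $t = 0$ yields
\begin{align*}
    \frac{d}{dt}\Big|_{t=0} \int_{F_{-t}(\Omega)} Q^2(y)\,dy = -\int_{\Omega} 2\gamma\, z_1\, z_2^{2\gamma - 1}\,dz_1\,dz_2.
\end{align*}
The desired conclusion is therefore equivalent to the pointwise bound
\begin{align*}
    z_2^{2\gamma - 1}\bigl(2\gamma\, z_1 - a\, z_2\bigr) \ge 0 \qquad \text{for all } (z_1, z_2) \in \Omega,
\end{align*}
after which integrating over $\Omega$ gives exactly $-\int_{\Omega} 2\gamma z_1 z_2^{2\gamma -1}\,dz \le -a\int_{\Omega} z_2^{2\gamma}\,dz = -a\int_{\Omega} Q^2\,dz$.

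The pointwise bound is immediate from the geometric hypotheses on $\Omega$: since $c > 0$, the factor $z_2^{2\gamma - 1}$ is positive, and using $z_1 \ge a$ and $z_2 \le d$ gives $2\gamma z_1 - a z_2 \ge a(2\gamma - d)$. The two cases of the hypothesis collapse into the single inequality $d \le 2\gamma$: when $\gamma \ge \tfrac{1}{2}$ we have $d \le 1 \le 2\gamma$, and when $0 < \gamma < \tfrac{1}{2}$ the hypothesis $d \le 2\gamma$ is assumed directly. In both regimes $a(2\gamma - d) \ge 0$, finishing the proof. The only delicate point, and hence the main thing to be careful about, is arranging the dominated convergence so the differentiation under the integral is rigorous; everything else is a direct computation, and the role of the sharp hypothesis $d \le 2\gamma$ is precisely to make the pointwise integrand nonnegative without any cancellations.
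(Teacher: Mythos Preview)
Your proof is correct and uses the same underlying inequality as the paper, namely that $2\gamma y^{2\gamma-1}\ge y^{2\gamma}$ for $0<y\le 2\gamma$, which in both cases follows from $d\le 2\gamma$. The organization differs slightly: the paper slices $\Omega$ vertically, writes each slice as a countable union of intervals, and applies the fundamental theorem of calculus interval-by-interval before invoking the pointwise bound; you instead perform the volume-preserving change of variables $y=F_{-t}(z)$ to pull the integral back to the fixed domain $\Omega$, differentiate under the integral sign, and then check the pointwise inequality $2\gamma z_1 z_2^{2\gamma-1}\ge a\,z_2^{2\gamma}$ directly. Your route is a bit more streamlined, as it avoids the slice-and-interval bookkeeping, while the paper's computation makes the role of the endpoint contributions more explicit; substantively the two arguments are equivalent.
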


\begin{proof}
For each $x \in [a, b]$, we write $\Omega_x = \Omega \cap \left(\{x\} \times \mathbb{R}\right).$ Note that $\Omega_x$ is a relatively open set, and is therefore a countable union of open intervals. We note that
\begin{align*}
   \frac{d}{dt} \int_{F_{-t}(\Omega)} Q^2(x)dx & =  \int_a^b \frac{d}{dt} \int_{F_{-t}(\Omega_x)}y^{2\gamma}dy dx.
\end{align*}
Investigating a single interval $(y_1, y_2)$ we obtain
\begin{align*}
   \frac{d}{dt} \int_{y_1-tx}^{y_2-tx}y^{2\gamma}dy & = \frac{d}{dt} \frac{1}{2\gamma +1}((y_2-tx)^{2\gamma+1}- (y_1-tx)^{2\gamma+1})\\
    & = (-x)((y_2-tx)^{2\gamma}- (y_1-tx)^{2\gamma})\\
    & \le -a ((y_2-tx)^{2\gamma}- (y_1-tx)^{2\gamma}).
\end{align*}
Evaluating at $t = 0$, we obtain $\frac{d}{dt} \int_{y_1-tx}^{y_2-tx}y^{2\gamma}dy \le -a (y_2^{2\gamma}- y_1^{2\gamma})$.

By assumption $(i.)$, if $\gamma \ge \frac{1}{2},$ then  $y_1, y_2 \in [0, 1]$.  We note that $2\gamma y^{2\gamma-1} \ge y^{2\gamma}$ for $y \in [0, 1]$ and $\gamma \ge \frac{1}{2}.$  Thus
\begin{align*}
    (y_2^{2\gamma}- y_1^{2\gamma}) & = \int_{y_1}^{y_2} 2\gamma y^{2\gamma - 1}dy\\
    & \ge \int_{y_1}^{y_2} y^{2\gamma} dy\\
    & = \frac{1}{2\gamma + 1}(y_2^{2\gamma+1}-y_1^{2\gamma+1}).
\end{align*}

If $0<\gamma < \frac{1}{2}$, then by assumption $(ii.)$, $y_1, y_2 \in [0, 2\gamma]$. We note that for $y \in [0, 2\gamma]$, $2\gamma y^{2\gamma-1} \ge y^{2\gamma}$.  Thus, for $y_1, y_2 \in [0, 2\gamma]$, the previous estimate holds.

Therefore, we may sum over all the intervals $(y_{2i-1}, y_{2i}) \in \Omega_x$ and obtain
\begin{align*}
    \sum_{i} -a (y_{2i}^{2\gamma}- y_{2i-1}^{2\gamma}) \le -a \int_{\Omega_x}y^{2\gamma}dy.
\end{align*}
Integrating with respect to $x \in [a, b]$, we obtain 
\begin{align*}
    \frac{d}{dt}|_{t = 0} \int_{F_{-t}(\Omega)} Q^2(x)dx  & \le \int_a^b - a \int_{\Omega_x}y^{2\gamma}dy\\
    & \le -a\int_{\Omega}Q^2(x)dx.
\end{align*}
\end{proof}

\begin{remark}
Lemma \ref{l:decrease estimate} is the only place, so far, where the assumption that $\{u>0\} \cap \Gamma = \emptyset$ comes into play.  Indeed, under this assumption, then if $v$ is the standard rescaling of $u$ at $(0,0) \in \Sigma$ at scale $N$, then $\{v>0\}$ satisfies the hypotheses of Lemma \ref{l:decrease estimate} in the standard window.
\end{remark}

\section{Construction of the Competitor}\label{S:competitor}

Let $u$ be an $1$-local minimizer of the functional (\ref{alt-caffarelli functional}) with standard radius $r_0 = 1.$ Suppose that $(0,0) \in \Sigma$ and $\{u>0 \} \cap \Gamma = \emptyset$. Without loss of generality, we assume that $(0,0) \in \overline{\{u >0\} \cap \{(x, y): y>0\}}$. In this section, we define a competitor $v$ for $u$ in a small neighborhood of $x.$  

For any $1<< N < \infty$, we let $v$ be the standard scaling at $(0,0)$ at scale $2N+1$, $S= [a, b]\times [0, c]$ the standard window, and $\mathcal{O}'$ the standard component. We decompose $S$ into two special sub-regions
\begin{align*}
    S^L & := [a+\frac{1}{2}, \frac{b-a}{2}] \times [0, c]\\
    S^R & := [\frac{b-a}{2}, b-\frac{1}{2}] \times [0, c].
\end{align*}
Let $\tau_{x_0}:\mathbb{R}^2 \rightarrow \mathbb{R}^2$ be translation in the $x$-variable
\begin{align*}
    \tau_{x_0}(x, y) = (x-x_0, y).
\end{align*}

By Lemma \ref{l: e-nbhd}, we may find an $0<\epsilon$ such that $B_{\epsilon}(\mathcal{O}' \cap \left([a+\frac{1}{2}, b- \frac{1}{2}]\times[0, c] \right))$ does not intersect any other components of $\{v>0\} \cap S$.  Similarly, there is an $0< \eta$ such that $\dist(\partial \mathcal{O}' \cap S, \Gamma) \ge \eta.$  Let $\eta':= \min\{\eta, \epsilon\}$

For $0< t \le \epsilon' \frac{1}{2N}$, we define a family of functions $v_t: S
\rightarrow \mathbb{R}^2$ piece-wise as follows.
\begin{align*}
    v_t(x, y) & = \begin{cases}
     v \circ \tau_{-a-\frac{1}{2}} \circ F_t \circ \tau_{a+ \frac{1}{2}}(x, y) & (x, y) \in B_{\frac{\epsilon}{2}}(\mathcal{O}') \cap S^L \\
     v \circ \tau_{-b+\frac{1}{2}} \circ F_{-t} \circ \tau_{b-\frac{1}{2}}(x, y) & (x, y) \in B_{\frac{\epsilon}{2}}(\mathcal{O}') \cap  S^R\\
     v(x, y) & (x, y) \in S \setminus (B_{\frac{\epsilon}{2}}(\mathcal{O}') \cap (S^L \cup S^R))
    \end{cases}
\end{align*}
Note that $v_t \in K_{v, S}$ for $0< t \le \epsilon' \frac{1}{2N}$.

\section{Proof of the Main Theorem}

Since $v_t \in K_{v, S}$, by Remark \ref{minimality preserved}  $J_{Q}(v, S_1) \le J_{Q}(v_t, S)$ for all sufficiently small $0<t$. We shall define 
\begin{align*}
I_1(t)& := \int_{B_{\frac{\epsilon}{2}}(\mathcal{O}') \cap S}|\nabla v_t|^2dx\\
I_2(t)& := \int_{B_{\frac{\epsilon}{2}}(\mathcal{O}')\cap S}Q^2(x)\chi_{\{v_t>0\}}(x)dx\\
I(t) & := I_1(t) + I_2(t).
\end{align*}
Note that by the construction of $v_t$, since $v_t = v$ in $S \setminus (B_{\frac{\epsilon'}{2}}(\mathcal{O}') \cap (S^L \cup S^R))$.  Therefore, by the assumption of local minimality, for all $0< t< \epsilon \frac{1}{2N}$ we have $I(0) \le I(t).$  We shall prove the main theorem by showing that for all sufficiently small $0< t$, $I(0) > I(t).$  

By Lemma \ref{l:increase estimate}, and Corollary \ref{c:local Lipschitz bound}, we have that 
\begin{align*}
    I_1(t) & \le (1 + t + t^2)I_1(0)\\
    & \le I_1(0) + 2(t + t^2) NC^2(2,\gamma).
\end{align*}

To obtain the necessary decrease estimate, we decompose $S$ into $N-1$ regular subintervals of width $1$.  Then, Lemma \ref{l:decrease estimate}, Corollary \ref{c:density}, and Lemma \ref{l:interior balls} applied to each subinterval imply
\begin{align*}
    \frac{d}{dt}|_{t=0} I_2(t) & \le -2\sum_{i = 1}^{N-1}(i-1) C(c, \gamma) \left(\frac{C_1(\gamma)}{C_2(\gamma)}\right)^{2\gamma}\\
    &\le - c_4(2, \gamma)\sum_{i = 1}^{N-1}(i-1)\\
    &\le - c_4(2, \gamma)\frac{\left(N -2 \right) \left(N -3  \right)}{2}\\
    &\le - c_5(2, \gamma)(N^2 -5N +6).
\end{align*}
Note that for $N$ sufficiently large $c_5(2, \gamma)(N^2-5N +6) \ge 5NC^2(2, \gamma).$  Thus, for $N$ sufficiently large and $0< t$ sufficiently small
\begin{align*}
    I(t) - I(0) & \le - t\frac{1}{2}c_5(2, \gamma)(N^2-3N +3) + (t + t^2)2NC^2(2, \gamma)\\
    & \le - t\frac{1}{2}c_5(2, \gamma)(N^2-3N +3) + t4NC^2(2, \gamma)\\
    & \le -t\frac{1}{2}NC^2(2, \gamma)<0.
\end{align*}
This contradicts the minimality of $v.$  Hence, $\Sigma = \emptyset.$

\bibliography{references}
\bibliographystyle{amsalpha}

\end{document}